\newcounter{Scounter}
\newtheorem{thm}{Theorem}
\newtheorem{cor}[thm]{Corollary}
\newtheorem{definition}[thm]{Definition}
\newtheorem{lemma}[thm]{Lemma}
\newtheorem{con}[thm]{Conjecture}
\newtheorem{prob}{Problem}
\newcommand{\PROOF}[1]{\medbreak\noindent\textbf{Proof #1.}\ }
\newcommand{\qed}{{$\quad\square$\vs{3.6}}}
\newcommand{\vs}[1]{\vspace*{#1 mm}}
\def\thanks#1{%
   \footnotemark
   \edef\@tempa{\noexpand\noexpand\noexpand\footnotetext[\the\c@footnote]}%
   \toks@\expandafter{\@thanks}%
   \toks\tw@{{#1}}
   \xdef\@thanks{\the\toks@\@tempa\the\toks\tw@}}
\begin{document}

\title{Cycle Double Covers via Kotzig Graphs}

\author{
Herbert Fleischner, Roland H\"aggkvist, Arthur Hoffmann-Ostenhof
}
\date{}
\maketitle

\begin{abstract}
We show that every $2$-connected cubic graph $G$ has a cycle double cover if $G$ has a spanning subgraph $F$ such that (i) every component of $F$ has an even number of vertices (ii) every component of $F$ is either a cycle or a subdivision of a Kotzig graph and (iii) the components of $F$ are connected to each other in a certain general manner.
\end{abstract}

\noindent
{\bf Keywords:} 
cubic graph, 3-regular graph, cycle double cover, frame, Kotzig graph.

\section{Introduction and definitions}

All graphs here are finite and considered loopless if not stated otherwise. 
A \textit {cycle} is a $2$-regular connected graph. An \textit {eulerian graph} is a (not necessarily connected) graph where every vertex has even degree. The disjoint union is denoted by $\cupdot$. A $vw$-path is a path with endvertices $v$ and $w$. An edge-colored cycle is called \textit {bicolored} if the edges of the cycle are colored with at most two colors. Within a proof "if and only if" may be abbreviated by "iff". For terminology not defined in this paper, we refer to \cite{BM}. 

A \textit {Kotzig graph} is a cubic graph admitting a proper $3$-edge-coloring such that each bicolored cycle is hamiltonian.
Such edge-coloring of a cubic graph is called a \textit{Kotzig-coloring}.
The $\theta$-graph is the unique bridgless cubic graph with two vertices. It is the smallest Kotzig graph.

A \textit{cycle cover} of a graph $G$ is a set of cycles $S$ of $G$ such that each edge of $G$ is contained in at least one cycle of $S$.
A \textit{$k$-cycle cover} is a cycle cover $S$ which has a partition into $k$ subsets such that each consists of pairwise edge disjoint cycles.
A \textit {cycle double cover} (CDC) $S$ of a graph $G$ is a set of cycles of $G$ such that every edge of $G$ is contained in precisely two cycles of $S$. 
A CDC $S$ of a cubic graph is called a $k$-CDC if $S$ can be partitioned into $k$ subsets such that each consists of pairwise edge-disjoint cycles.

The well known \textit {Cycle Double Cover Conjecture}(CDCC) states that every $2$-connected cubic graph has a CDC.
 There are several stronger versions of the CDCC. For a survey on the CDCC, see \cite{Z}. 
 
Let $H$ be a graph, then 
a \textit {subdivision} of $H$ is a graph $H'$ which is constructed from $H$ by replacing at least one edge $e \in E(H)$, say $e=vw$, by a $vw$-path of length at least $2$ (denoted by $e'$ in $H'$). \\
The degree of a vertex $v$ in a graph $G$ is denoted by $d(v,G)$. The set of edges incident with $v$ is 
denoted by $E_v$. Vertex-colorings and edge-colorings need not be proper colorings, i.e., vertices, edges respectively of the same color may be adjacent. The number of edges with color $c$ incident with $v \in V(G)$ is denoted by $d_c(v,G)$. Every $3$-coloring of a vertex or edge set of a graph uses colors from the set $\{1,2,3\}$. 
A $3$-total-coloring of a graph $G$ consists of a $3$-edge-coloring and a $3$-vertex-coloring of $G$.

\begin{definition}\label{kotzig}
Let $G$ be a $2$-connected cubic graph.\\
{\bf (a)} A Kotzig-frame of $G$ is a spanning subgraph $F$ of $G$
such that \\
(i) every component of $F$ has an even number of vertices and \\
(ii) every component of $F$ is either a cycle or a subdivision of a Kotzig graph.\\
{\bf (b)} We call a component of $F$ a C-component if it is a cycle; otherwise we call it a K-component.\\
{\bf (c)} An edge $e$ with $e \not\in E(F)$ is called a chord of $F$ if both endvertices of $e$ are in the same component of $F$. The set of chords of $F$ is denoted by $F^*$. \\
{\bf (d)} Moreover, $G_F$ denotes the eulerian graph obtained by contracting every component of $F$ to a distinct vertex and deleting all loops which arise in the contracting process. A vertex of $G_F$ is called a C-vertex if it corresponds to a C-component, otherwise it is called a K-vertex.
\end{definition}

It is known that every cubic graph $G$ with a Kotzig-frame $F$ has a CDC if the number of K-components in $F$ is at most one, see \cite{HM1}. Other spanning subgraphs than Kotzig graphs and even cycles also imply the existence of a CDC. For more information regarding this approach towards a solution of the CDCC, we refer to \cite{FH,HM2,YZ}. Note that not every $3$-connected cubic graph has a Kotzig-frame, see \cite{HO}. However, so far there is no cyclically $4$-edge connected cubic graph known without a Kotzig-frame.

\begin{con}\label{c:4}
Every cyclically $4$-edge connected cubic graph has a Kotzig-frame.
\end{con}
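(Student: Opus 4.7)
The plan is to begin from a canonical $2$-factor of $G$ and then to fuse its odd cycles in pairs into subdivisions of small Kotzig graphs. Since $G$ is cyclically $4$-edge connected it is bridgeless, so by Petersen's theorem $G$ has a perfect matching $M$; set $F_0 := G \setminus M$, a spanning $2$-factor. Because $|V(G)|$ is even, the number of odd cycles of $F_0$ is even. If $F_0$ has no odd cycle, then $F_0$ itself is a Kotzig-frame whose components are all even C-components, and we are done. Otherwise, label the odd cycles $C_1,\ldots,C_{2k}$. The aim is to preserve every even cycle of $F_0$ as a C-component while merging the $C_i$'s in pairs: for each pair $\{C,C'\}$, absorb $C \cup C'$ together with a short corridor of $M$-edges (and possibly intermediate even $F_0$-cycles) into a single K-component that is a subdivision of a Kotzig graph. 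Parity condition (i) of Definition~\ref{kotzig}(a) is then automatic because odd $+$ odd is even.

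The pairing and the choice of corridors lean on cyclic $4$-edge connectivity: in such a graph every edge cut of size at most $3$ is trivial (isolates a single vertex), so at least four $M$-edges leave any union of several $F_0$-components, and in particular no two odd cycles are isolated from each other in the matching-quotient $G_{F_0}$. I would build an auxiliary graph on $\{C_1,\ldots,C_{2k}\}$ whose edges record the existence of a suitable $M$-corridor between two odd cycles, and apply a Tutte-type matching argument to find a perfect matching in this auxiliary graph. The cleanest corridor is a bundle $S$ of three $M$-edges running from $C$ to $C'$: the topological quotient of $C \cup C' \cup S$ has just two branch vertices and is a subdivision of the theta graph, the smallest Kotzig graph.

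The principal obstacle, and the reason the statement is only a conjecture, is the Kotzig condition itself. Being Kotzig requires a proper $3$-edge-coloring in which every bicolored cycle is Hamiltonian, which is highly restrictive: the smallest cubic Kotzig graphs are the theta graph, $K_4$, the $3$-prism, and $K_{3,3}$, while many cubic graphs (the $3$-cube, every snark) are not Kotzig. When a corridor must use more than three $M$-edges or must absorb intermediate even $F_0$-cycles, the topological quotient can easily fail to be Kotzig. A natural response is an exchange argument along $M$-alternating cycles: locally swap edges of $F_0$ and $M$ using the flexibility supplied by cyclic $4$-edge connectivity, and iterate until every paired quotient is Kotzig. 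Making these local swaps globally coherent across all $k$ pairs simultaneously, and proving termination, is precisely the step I do not see how to carry out; any proof of Conjecture~\ref{c:4} must overcome exactly this hurdle.
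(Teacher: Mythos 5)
The statement you set out to prove is Conjecture~\ref{c:4}, and the paper does not prove it: it is stated as an open conjecture (the paper only remarks that no cyclically $4$-edge connected cubic graph without a Kotzig-frame is currently known, and Corollary~\ref{c:big} shows what its truth, together with Conjecture~\ref{c:row}, would yield). Your text is also not a proof, as you yourself concede in the final sentence, so there is no complete argument on either side to compare; what follows is where your sketch concretely breaks down.

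First, the local construction is misdescribed: a bundle $S$ of three $M$-edges between two odd cycles $C$ and $C'$ has six pairwise distinct endpoints (in a cubic graph each vertex lies on exactly one edge of the perfect matching $M$), so $C\cup C'\cup S$ has six $3$-valent vertices and suppressing the $2$-valent ones gives the $3$-prism, not the $\theta$-graph; a $\theta$-subdivision would need the corridor edges to share endpoints, which is impossible here. This slip is harmless only because the prism happens to be Kotzig. Second, and more seriously, nothing guarantees such a corridor exists for a chosen pair: cyclic $4$-edge connectivity gives at least four $M$-edges leaving each odd cycle, but these may be distributed among many different components of $F_0$, and also some edges incident with $C$ may be chords; your auxiliary graph on the odd cycles may fail to have a perfect matching, and no Tutte/Hall-type condition is even formulated, let alone verified. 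Third, when a corridor must absorb intermediate even cycles, the suppressed graph need not be Kotzig, and parity of the resulting component is no longer automatic (odd $+$ odd $+$ even absorbed cycles is still even, but only if you absorb whole cycles, which conflicts with keeping $F$ spanning by cycles elsewhere); the $M$-alternating exchange argument you invoke to repair this, made simultaneously coherent over all pairs with a termination proof, is precisely the missing content. Finally, recall the paper's citation \cite{HO}: $3$-connected cubic graphs can fail to have any Kotzig-frame, so a correct proof must use cyclic $4$-edge connectivity in an essential, quantitative way, whereas your sketch invokes it only heuristically (``no two odd cycles are isolated from each other''), which is far weaker than what the pairing argument needs.
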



\begin{prob}
Does every cyclically $4$-edge connected cubic graph have a Kotzig-frame $F$ such that every component of $F$ is either a cycle or a subdivision of a $\theta$-graph?
\end{prob}

In order to state the main theorem we need some "color terminology" concerning Kotzig graphs.

\begin{definition}\label{perfect}
Let $\alpha$ be a $3$-total-coloring of a subdivision $K'$ of a Kotzig graph $K$.
Then $\alpha$ is said to be a perfect coloring of $K'$ if\\
(i) two edges in $K'$ have the same edge color if they are incident with the same $2$-valent vertex,\\
(ii) each $2$-valent vertex of $K'$ has the same color as its incident edges,\\
(iii) the implied $3$-edge-coloring of $K$ (every edge $e \in E(K)$ has the same color as any edge of the corresponding path $e'$ in $K'$) is a Kotzig-coloring of $K$. 
\end{definition}

Note that the vertex color of vertices $v$ with $d(v,K')=3$ is not relevant for this definition and also not for later purposes. Call a $3$-total-coloring $\alpha$ of a Kotzig-frame $F$ of a cubic graph $G$ \textit{perfect} if the coloring restricted to every K-component of $F$ is perfect and if in each C-component all vertices and edges have the same color. Two vertices in different C-components may have different colors. For an example of a perfect coloring, see Figure 1.

\begin{definition}\label{wellconnected}
Let $F$ be a Kotzig-frame of a cubic graph $G$ and $\alpha$ a perfect coloring of $F$.
Let $G_{F,\alpha}$ be the partially $3$-edge colored graph $G_F$ which results 
from coloring every edge $e$ of $G_F$ which corresponds to an edge $e=vw$ of $G-E(F)-F^*$ satisfying $\alpha(v) = \alpha (w)$, with color $\alpha (v)$; all other edges of $G_{F,\alpha}$ are uncolored.
Then $G_F$ is said to be well connected (with respect to $\alpha$) if $G_{F,\alpha}$ has a connected subgraph $H$ 
such that $H$ contains all K-vertices and where all edges of $H$ 
have the same color. If such perfect coloring of $F$ exists, then we say for brevity's sake that $F$ is well connected.

\end{definition}


We formulate the main result.

\begin{thm}\label{main}
Let $G$ be a cubic graph with a well connected Kotzig-frame, then $G$ has a $6$-cycle double cover.
\end{thm}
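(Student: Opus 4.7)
The plan is to construct six pairwise edge-disjoint even subgraphs of $G$ (each a disjoint union of cycles of $G$) whose edge-multiset covers every edge of $G$ exactly twice. Fix a perfect coloring $\alpha$ of $F$ and, by well-connectedness, a connected monochromatic subgraph $H \subseteq G_{F,\alpha}$ in color, say, $3$, meeting every K-vertex. The key local observation is that within each K-component $K_i'$ the induced Kotzig-coloring of the underlying Kotzig graph $K_i$ produces three bicolored Hamilton cycles of $K_i$, which lift through the subdivision paths to three cycles $X_i^{12}, X_i^{13}, X_i^{23}$ of $K_i'$ forming a local $3$-CDC of $K_i'$. In particular $X_i^{12}$ misses every color-$3$ $2$-valent vertex of $K_i'$, while both $X_i^{13}$ and $X_i^{23}$ pass through every such vertex.

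I plan to build the six cycle-classes in two groups of three. The first three classes come from splicing the local family $\{X_i^{12}\}_i$ (and its counterparts among the $X_i^{13}, X_i^{23}$ at color-$3$ subdivision vertices) together with the color-$3$ bridging edges in $H$ and with the color-$3$ C-components, so as to thread each color-$3$ subdivision vertex into a global cycle of $G$. The local move at such a vertex $v$ is this: $v$ has two color-$3$ $F$-edges and exactly one edge outside $F$; when that outside edge is a color-$3$ bridging edge of $H$ (or is routed through $H$ via a color-$3$ C-component), perform an exchange that routes one of the spliced cycles through $v$ using one color-$3$ $F$-edge together with the bridging edge, while the complementary exchange takes care of the other color-$3$ $F$-edge. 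Connectivity of $H$ through every K-vertex is what allows the local exchanges at all color-$3$ subdivision vertices to be composed into honest disjoint unions of cycles of $G$ rather than into $2$-regular multigraphs that collapse because two local reroutes coincide. The remaining three classes are then obtained by an analogous construction using the bridging edges outside $H$, the chords in $F^*$, and the bridging edges whose two endpoints are not $\alpha$-monochromatic; the doubling from three classes to six provides exactly the flexibility needed to absorb these edges, which are invisible in $G_{F,\alpha}$.

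The main obstacle is twofold. First, one must verify that the spliced objects in the first step are genuinely disjoint unions of cycles of $G$, for all three spliced classes simultaneously, with no accidental creation of vertices of degree $\ne 2$; this is where the connectedness of the color-$3$ spine $H$ through all K-vertices is essential. Second, the covering of the chords $F^*$ and of the non-monochromatic bridging edges must be engineered by a careful local analysis at each $2$-valent vertex of $F$ and each chord endpoint, distributing multiplicities among the last three classes so that every edge of $G$ finishes with multiplicity exactly two across the six classes. Given these two ingredients, the six cycle-classes double-cover the K-component edges by the lifted Kotzig structure, double-cover the C-component edges via the two spliced traversals that enter and leave the C-component through bridging edges of $H$ and elsewhere, and double-cover the chords and bridging edges via the last three classes, completing the proof.
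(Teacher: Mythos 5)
Your proposal is a plan rather than a proof: the two items you yourself flag as ``main obstacles'' are exactly the hard content of the theorem, and neither is resolved. The first gap is the splicing step. Well-connectedness only provides a connected subgraph $H$ of the contracted, partially edge-colored graph $G_{F,\alpha}$, monochromatic in (say) color $3$ and containing all K-vertices; it says nothing about the third edge at an individual color-$3$ $2$-valent vertex $v$ of a K-component. For most such $v$ this edge is not a color-$3$ bridging edge of $H$ (its endpoints may have different $\alpha$-colors, or it may be a chord in $F^*$), so your local exchange is simply unavailable there, and ``routed through $H$ via a color-$3$ C-component'' is not a defined operation. More importantly, whether your spliced pieces close up into disjoint unions of cycles is a parity question you never address: for each frame component and each color $i$ one needs the number of incident color-$i$ non-frame edges to be even. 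That parity control is the heart of the paper's argument: it passes to the $3$-row graph $R(G,F,\alpha)$ and builds an amiable coloring by taking a $t$-join in the graph obtained from rows $2$ and $3$ by column-wise identification (made cross-free by the switching result, Corollary \ref{cor:switch}), and then a second $t$-join in row $1$; well-connectedness is used precisely, and only, to guarantee that the vertices needing parity correction all lie in the single component of $R[V_1]$ corresponding to $H$, so that this last $t$-join exists (Lemma \ref{lem:tjoin}). Theorem \ref{t:swedish} then converts the coloring into three subgraphs $J_i=H_i\cup E_i\cup L_i$, each a subdivision of a $3$-edge-colorable cubic graph together with disjoint cycles, each admitting a $2$-cycle cover that covers its even $2$-factor once and the complementary $1$-factor (which absorbs the chords in $L_i$ and the non-frame edges in $E_i$) twice; the multiplicity bookkeeping is then automatic, since every frame edge lies in exactly two of the $H_i$ and every other edge in exactly one $E_i\cup L_i$. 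Your proposal offers no substitute for either the parity argument or this bookkeeping, so the ``careful local analysis'' distributing multiplicities among your last three classes remains entirely unconstructed.

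A smaller but genuine slip: six \emph{pairwise edge-disjoint} even subgraphs cannot cover every edge twice; in a $6$-CDC each edge lies in exactly two of the six classes, so the classes are not pairwise edge-disjoint, only the cycles within one class are.
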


Since it is straightforward to see that the Kotzig-frames in the next two corollaries are well connected, 
we obtain by the above theorem the following results.

\begin{cor}\label{mainimply}
Let $G$ be a cubic graph with a Kotzig-frame $F$. Let $\tilde K$ be the set of K-vertices and 
$\tilde C$ the set of C-vertices of $G_F$. Then $G$ has a $6$-cycle double cover if one of the following holds.\\
(i) $\tilde K$ is an independent set in $G_F$ and $G_F-\tilde K$ is connected.\\
(ii) every vertex of $\tilde K$ is adjacent to a vertex of $\tilde C$ and $G_F-\tilde K$ is connected.\\ 
(iii) there is a subset $C^* \subseteq \tilde C$ such that $G_F[C^*]$ is connected and $\tilde K \subseteq N(C^*)$.
\end{cor}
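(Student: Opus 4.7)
My plan is to reduce (i) and (ii) to (iii) and then apply Theorem~\ref{main}. Observation (i)$\Rightarrow$(ii): if $\tilde K$ is independent in $G_F$, then every neighbor in $G_F$ of a K-vertex is a C-vertex; since $G$ is $2$-connected, each K-vertex has at least one $G_F$-neighbor, so each vertex of $\tilde K$ is adjacent to a vertex of $\tilde C$. Observation (ii)$\Rightarrow$(iii), with $C^* = \tilde C$: then $G_F[C^*] = G_F - \tilde K$ is connected by hypothesis and $\tilde K \subseteq N(\tilde C) = N(C^*)$. Hence it suffices to establish (iii).

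Assuming (iii), I would build a perfect coloring $\alpha$ of $F$ as follows. Color every C-component whose $G_F$-representative lies in $C^*$ uniformly with color $1$ (every vertex and every edge of that component); the remaining C-components may be colored arbitrarily. For each K-vertex $k_i$, the hypothesis $\tilde K \subseteq N(C^*)$ supplies a neighbor $c_j \in C^*$ in $G_F$; the corresponding $G_F$-edge comes from an edge $v_iw_i \in E(G)\setminus (E(F)\cup F^*)$ with $v_i\in V(K_i)$ and $w_i$ in the C-component represented by $c_j$. Since $G$ is cubic and $v_iw_i$ lies outside $F$, the vertex $v_i$ has degree $2$ in $K_i$ and is therefore a subdivision vertex of $K_i$. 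I would then choose a perfect coloring of $K_i$ assigning color $1$ to $v_i$.

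The main technical subclaim (and the only non-trivial step) is that such a perfect coloring of $K_i$ always exists: for any subdivision vertex $v$ of a subdivision $K'$ of a Kotzig graph $K$, there is a perfect coloring of $K'$ giving $v$ color $1$. This holds because $v$ lies in the interior of the subdivided path of a unique edge $e\in E(K)$, and starting from any Kotzig-coloring $c$ of $K$, if $c(e)\neq 1$ one swaps colors $1$ and $c(e)$ throughout $c$. Such a color permutation is a symmetry of the Kotzig axioms (proper $3$-edge-coloring with every bicolored cycle hamiltonian), so the result is again a Kotzig-coloring, and its induced perfect coloring of $K'$ assigns color $1$ to $v$.

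To verify well-connectedness with respect to this $\alpha$, let $H$ be the subgraph of $G_{F,\alpha}$ on vertex set $C^* \cup \tilde K$ consisting of color-$1$ edges only. Every edge of $G_F[C^*]$ arises from an edge of $G$ joining two C-components of $C^*$ whose endpoints both carry $\alpha$-color $1$, so it acquires color $1$ in $G_{F,\alpha}$; thus $G_F[C^*] \subseteq H$ and is connected. For each K-vertex $k_i$, the chosen edge $k_i c_j$ satisfies $\alpha(v_i)=\alpha(w_i)=1$ and therefore also lies in $H$, attaching $k_i$ to $G_F[C^*]$. Hence $H$ is a connected monochromatic subgraph of $G_{F,\alpha}$ containing every K-vertex, so $F$ is well connected with respect to $\alpha$, and Theorem~\ref{main} delivers the required $6$-cycle double cover.
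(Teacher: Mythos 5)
Your proposal is correct and follows exactly the route the paper intends: the paper simply asserts that the Kotzig-frames in these cases are well connected and invokes Theorem \ref{main}, and you supply the straightforward verification (reducing (i) and (ii) to (iii), recoloring each K-component so the attaching $2$-valent vertex gets color $1$, and exhibiting the monochromatic connected subgraph $H$). No substantive difference from the paper's argument.
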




\begin{cor}\label{mainimply1}
Let $G$ be a cubic graph with a Kotzig-frame $F$ which has at most two K-components, then $G$ has a $6$-cycle double cover.
\end{cor}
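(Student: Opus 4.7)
The plan is to apply Theorem \ref{main}: it suffices to show $F$ is well connected, which I would do by case analysis on the number $t \in \{0,1,2\}$ of K-components. For $t = 0$, Corollary \ref{mainimply}(i) applies trivially since $\tilde K = \emptyset$ is independent and $G_F - \tilde K = G_F$ is connected ($G$ being $2$-connected). For $t = 1$, let $k$ be the unique K-vertex: either $V(K) = V(G)$ (so $G_F$ is a single vertex, vacuous case), or $k$ has positive degree in the connected graph $G_F$ and every neighbor of $k$ lies in $\tilde C$, so Corollary \ref{mainimply}(iii) with $C^* = \{c\}$ for any such $c$ finishes.

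The substantive case is $t = 2$. Let $k_1, k_2$ be the K-vertices. If $k_1 k_2 \notin E(G_F)$, I would consider the connected components $D_1, \ldots, D_m$ of $G_F[\tilde C]$ and argue that some $D_{j_0}$ must be $G_F$-adjacent to both $k_1$ and $k_2$: otherwise $G_F$ would decompose as $\{k_1\} \cup \bigcup_{D_j \sim k_1} V(D_j)$ and $\{k_2\} \cup \bigcup_{D_j \sim k_2} V(D_j)$ with no edges between these parts, contradicting connectedness of $G_F$. Taking $C^*$ to be the vertex set of any path in $D_{j_0}$ joining a $k_1$-neighbor to a $k_2$-neighbor then gives $G_F[C^*]$ connected with $\tilde K \subseteq N(C^*)$, and Corollary \ref{mainimply}(iii) applies.

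The main obstacle is the case $k_1 k_2 \in E(G_F)$, where Corollary \ref{mainimply} can fail outright (e.g.\ when $\tilde C = \emptyset$). Here I would argue directly from Definition \ref{wellconnected}, exploiting the freedom of Kotzig-colorings: any color-permutation applied to a Kotzig-coloring is again a Kotzig-coloring. For the $n \ge 1$ cross-edges $v_i w_i$ with $v_i \in V(K_1)$, $w_i \in V(K_2)$, each endpoint is $2$-valent in its subdivided K-component and hence takes its color from the relevant original Kotzig-edge. Writing $\tau = \sigma^{-1}\pi$ for independent permutations $\pi, \sigma \in S_3$ of the base colorings $c^{(1)}, c^{(2)}$ of $K_1, K_2$, cross-edge $i$ is monochromatic iff $\tau(c^{(1)}(v_i)) = c^{(2)}(w_i)$, a condition satisfied by exactly $2$ of the $6$ elements $\tau \in S_3$. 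Summing over $i$ yields $2n > 0$ total (edge, $\tau$)-matches, so some $\tau$ forces at least one monochromatic cross-edge of some color $c$. Coloring every C-component (if any) with color $c$ completes this to a perfect coloring $\alpha$ under which the matched cross-edge becomes a color-$c$ edge $k_1 k_2$ of $G_{F,\alpha}$; then $H$ consisting of $k_1, k_2$ together with this edge is the required monochromatic connected subgraph containing $\tilde K$, so $F$ is well connected.
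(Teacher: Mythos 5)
Your proposal is correct, and it amounts to a detailed verification of exactly what the paper leaves as a remark: the sentence preceding Corollaries \ref{mainimply} and \ref{mainimply1} claims it is ``straightforward'' that such frames are well connected, and you supply that verification (via Corollary \ref{mainimply} for the cases with $t\le 1$ K-components and for non-adjacent K-vertices, and a direct color-permutation argument when $k_1k_2\in E(G_F)$), after which Theorem \ref{main} finishes. The paper's explicitly recorded derivation is different in mechanism: since in $R(G,F,\alpha)$ every column coming from a C-component consists of two isolated vertices, a frame with at most two K-components satisfies the hypothesis of Corollary \ref{cor:rows3}, which together with Theorem \ref{t:swedish} yields the $6$-CDC directly, bypassing well-connectedness and Theorem \ref{main} altogether. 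Your route buys a self-contained proof of the well-connectedness assertion --- in particular the only non-trivial point, that when the two K-vertices are adjacent one may permute the Kotzig-coloring of one K-component so that some cross-edge becomes monochromatic (the averaging over $S_3$ is more than needed; matching a single cross-edge suffices) --- while the paper's route is shorter given its Section 4 machinery. Two harmless imprecisions you may want to patch: in the non-adjacent case, a component of $G_F[\tilde C]$ adjacent to neither $k_1$ nor $k_2$ would itself already violate connectedness of $G_F$, so your two vertex sets do cover $V(G_F)$, but this should be said; and the case $V(K)=V(G)$ should be closed explicitly by noting that a frame whose contraction $G_F$ is a single K-vertex is trivially well connected (take $H$ to be that vertex), so Theorem \ref{main} still applies there.
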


\begin{prob}
Has every cyclically $4$-edge connected cubic graph a well connected Kotzig-frame?
\end{prob}

To prove Theorem \ref{main}, we will transform $G$ in the next sections into a so called $3$-row graph $R$ (see 
Def.\ref{rowgraph}) and show that $R$ has a certain edge and vertex coloring (see Def.\ref{d:swedish}) which will imply by Theorem \ref {t:swedish} a $6$-CDC of $G$. This approach could also lead to a proof that every cubic graph with a Kotzig-frame has a CDC by solving Conjecture \ref{c:row} and applying Theorem \ref {t:swedish} (see below).

\begin{definition}\label{rowgraph}
A $3$-row graph $R$ is a graph with vertices $v_{ij}$ where $1 \leq i \leq 3$, $1 \leq j \leq s$ such that 
$C_j$ is an independent set of $R$ where $C_j:=\{v_{1j},v_{2j},v_{3j}\}$ denotes the vertex set of the jth-column of $R$. Moreover, $V_i=\{v_{ij}:1\leq j \leq s\} $ denotes the vertex set of the $i$-th row of $R$. $R_C$ with $V(R_C)=\{c_1,c_2,...,c_s\}$ is the graph which is obtained from $R \cupdot \{c_1,c_2,...,c_s\} $ by identifying all vertices of $C_j$ with the vertex $c_j$, $j=1,2,...,s$. 
\end{definition}

Finally, we introduce a special $3$-total-coloring for 3-row graphs which we call \textit{amiable coloring}.

\begin{definition}\label{d:swedish}
An amiable coloring of a $3$-row graph $R$ is a $3$-vertex-coloring $f$ together with a $3$-edge-coloring $g$ such that the following holds for all $i \in \{1,2,3\}$ and all $j \in \{1,2,...,s\}$:\\
(i) $f(v) \not=f(w) \,\text{for every} \,\,\{v,w\} \subseteq C_j$ with $v \not= w$, \\
(ii) $f(v) \not= g(e) \,\text{for every} \, \,v \in V(R) \,\,\text{and for every} \,\,e \in E_v$, \\
(iii) $d_i(v_{1j}, R)+d_i(v_{2j}, R)+d_i(v_{3j}, R)$ is even.
\end{definition}

We use the ordered pair $(f,g)$ to denote an amiable coloring.

\begin{con}\label{c:row}
Let $R$ be a $3$-row graph such that $R_C$ is eulerian, then $R$ admits an amiable coloring.
\end{con}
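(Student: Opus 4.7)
The plan is to argue by induction on $|E(R_C)|$, peeling off one cycle at a time using the Eulerian hypothesis. In the base case $|E(R_C)|=0$, the graph $R$ is edgeless; assigning $\{1,2,3\}$ bijectively within each column satisfies (i), while (ii) and (iii) are vacuous. For the inductive step, suppose $R_C$ is Eulerian with at least one edge, and decompose $R_C$ into edge-disjoint cycles. Fix one such cycle $Z$, let $R'$ be obtained from $R$ by deleting the lifts of the edges of $Z$, and observe that $R'_C = R_C - E(Z)$ is still Eulerian, since $Z$ contributes an even number of edges at each column it visits. The induction hypothesis supplies an amiable coloring $(f',g')$ of $R'$.

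To extend $(f',g')$ to $R$, the idea is to keep $f=f'$ and assign colors to the lift edges of $Z$. The natural first attempt is monochromatic: if some color $c\in\{1,2,3\}$ does not appear in $\{f'(v): v \text{ is an endpoint of a lift edge of } Z\}$, color every lift edge of $Z$ with $c$. This preserves (ii) for the new edges by construction, preserves (i) since $f$ is unchanged, and preserves (iii) because every column visited by $Z$ has even $Z$-degree, so the count of $c$-edges at each such column grows by an even amount while the counts for the other two colors are unchanged.

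The main obstacle is that the three $f'$-values may exhaust $\{1,2,3\}$ among the vertices touched by $Z$, so no monochromatic choice is legal. Overcoming this requires either splitting $Z$ into sub-arcs and assigning different colors to different sub-arcs (while ensuring that at each visited column the per-color contribution remains even, a delicate parity constraint on where the splits may occur), or performing a \emph{column swap}, i.e.\ permuting the three $f'$-values within a column; the latter preserves (i) and (iii) but may break (ii) on edges of $g'$ already incident to that column, forcing cascading re-colorings. Showing that this cascade terminates and yields a globally valid amiable coloring is the crux. A clean way to organize the argument is to fix $f=f'$ and view (iii) as an $\mathbb{F}_2$-linear system whose variables are the binary choices at the ``free'' edges (those $e=uv$ with $f(u)=f(v)$), with the forced edges (where $f(u)\neq f(v)$) contributing the right-hand side; the decisive step is then to show that the Eulerian hypothesis on $R_C$ guarantees consistency of this system, which I expect to require a structural argument on cycle decompositions of $R_C$ rather than a purely local count.
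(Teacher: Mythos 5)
There is a genuine gap, and in fact the statement you are trying to prove is stated in the paper as an open \emph{conjecture}, not a theorem: by Corollary \ref{c:big} its truth, together with Conjecture \ref{c:4}, would prove the Cycle Double Cover Conjecture. The paper itself establishes it only for special classes of $3$-row graphs (Corollaries \ref{cor:rows}, \ref{cor:rows2}, \ref{cor:rows3}), essentially those where one row induces at most one nontrivial component, via the $t$-join and switching machinery of Lemma \ref{lem:tjoin}, Theorem \ref{t:switch} and Corollary \ref{cor:switch}. So any short inductive argument should be viewed with suspicion, and yours does not close: the step you yourself label ``the crux'' is exactly the content of the conjecture and is left unproved. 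Concretely, after peeling a cycle $Z$ off $R_C$, the lifted edges of $Z$ in general meet vertices of all three $f'$-colors, so no monochromatic extension exists; splitting $Z$ into monochromatic arcs forces a color change at some column, where the passage of $Z$ then contributes exactly one new edge of each of two colors, violating condition (iii) of Definition \ref{d:swedish} at that column unless this is compensated by recoloring edges of $g'$ elsewhere --- and no argument is given that such a compensation is always possible, nor that the ``cascading'' column swaps terminate. The $\mathbb{F}_2$-system reformulation does not repair this: consistency of that system is precisely what has to be shown, and you offer no proof, only the expectation that the Eulerian hypothesis suffices.

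There is a second, structural problem with the setup: you fix $f=f'$ throughout the induction. By Theorem \ref{t:extension}, whether a \emph{fixed} rainbow vertex coloring $f$ extends to an amiable coloring is equivalent to the existence of a parity-coloring of that particular arrangement of $R$, and Corollary \ref{cor:zebra} deliberately quantifies over rearrangements --- i.e.\ over the choice of which vertex in each column receives which color --- because a given arrangement need not admit one. So the amiable coloring of $R$ you are trying to build may require a vertex coloring that does not agree, column by column, with the inductively obtained coloring of $R'$; an induction that refuses to re-permute colors within columns (or that permutes them without controlling the damage to condition (ii) on already-colored edges) cannot be expected to go through. The paper's own proof of Theorem \ref{main} illustrates what a successful argument in a special case looks like: one first normalizes $\alpha$ (equivalently $f$) using Lemma \ref{arrange}, Lemma \ref{identical} and Corollary \ref{cor:switch}, and then solves the parity constraints globally with $t$-joins, using the extra hypothesis that the non-isolated vertices of one row lie in a single component; without such a hypothesis the $t$-join for $Y_1$ need not exist, which is exactly where the general conjecture remains open.
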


By Theorem \ref{t:swedish} and since a minimum counterexample to the CDCC is cyclically $4$-edge connected, we obtain

\begin{cor} \label{c:big}
The truth of Conjecture \ref{c:row} and Conjecture \ref{c:4} proves the CDCC.
\end{cor}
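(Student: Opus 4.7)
The plan is a short contradiction argument that simply chains together the two conjectures, the $(G,F)\mapsto R$ reduction mentioned at the end of the introduction, and Theorem \ref{t:swedish}. Let $G$ be a minimum counterexample to the CDCC, i.e. a cubic graph of smallest order admitting no cycle double cover. A standard splitting-along-an-edge-cut argument, recorded in the survey \cite{Z}, shows that such a minimum counterexample must be cyclically $4$-edge connected: if $G$ had a cyclic edge cut of size at most $3$, one could split $G$ into two strictly smaller cubic graphs, apply minimality to get CDCs of each, and glue them back along the cut into a CDC of $G$.

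Assuming Conjecture \ref{c:4}, the cyclically $4$-edge-connected graph $G$ therefore has a Kotzig-frame $F$. Next I would apply the $(G,F)\mapsto R$ construction developed in the forthcoming sections of the paper to produce a $3$-row graph $R$: its columns $C_j$ correspond to the components of $F$ (one column per component, one vertex per color of a perfect coloring of $F$), and its edges encode the chords in $F^*$ together with the edges of $G-E(F)-F^*$, classified by the colors they meet at their endpoints. Because every component of $F$ has an even number of vertices (clause (i) of Definition \ref{kotzig}(a)), a parity count shows that $R_C$ is eulerian. Assuming Conjecture \ref{c:row}, $R$ therefore admits an amiable coloring $(f,g)$, and then Theorem \ref{t:swedish} converts this amiable coloring into a $6$-cycle double cover of $G$, contradicting the choice of $G$ as a counterexample.

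The only nontrivial steps are the two ingredients that are not yet visible in the excerpt: the $(G,F)\mapsto R$ reduction and Theorem \ref{t:swedish} itself. The main obstacle, relative to what has been stated so far, is to verify that the reduction makes sense for an \emph{arbitrary} Kotzig-frame $F$ (and not only a well connected one, which is the case handled explicitly on the way to Theorem \ref{main}), and that $R_C$ is eulerian in full generality — the well-connected hypothesis is precisely what lets one exhibit an amiable coloring "by hand" without using Conjecture \ref{c:row}, so in the corollary one trades this hypothesis for the conjecture. Once the two ingredients from later sections are in place, the derivation of Corollary \ref{c:big} is the three-line implication above: Conjecture \ref{c:4} supplies a Kotzig-frame, Conjecture \ref{c:row} supplies an amiable coloring of the associated $R$, and Theorem \ref{t:swedish} translates this back into a $6$-CDC.
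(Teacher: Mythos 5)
Your argument is exactly the paper's (implicit) proof: a minimum counterexample to the CDCC is cyclically $4$-edge connected, Conjecture \ref{c:4} then yields a Kotzig-frame, which always admits a perfect coloring $\alpha$, the associated $3$-row graph $R(G,F,\alpha)$ has $R_C$ eulerian (it is isomorphic to $G_F$), Conjecture \ref{c:row} gives an amiable coloring, and Theorem \ref{t:swedish} produces a $6$-CDC. Only one small slip: the edges of $R$ correspond to $E(G)-E(F)-F^*$ only, so the chords in $F^*$ are \emph{not} encoded in $R$ (they are handled separately inside the proof of Theorem \ref{t:swedish}), but this does not affect the validity of your derivation.
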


Conjecture \ref{c:row} is proven within this paper for several classes of $3$-row graphs, see Corollaries \ref{cor:rows}, \ref{cor:rows2} and \ref{cor:rows3}. Note that Corollary \ref{cor:rows2} was the original motivation for writing this paper.

\section{$3$-row graphs and amiable colorings}

If we rename in a $3$-row graph $R$, $v_{ip}$ by $v_{iq}$ and $v_{iq}$ 
by $v_{ip}$ for $i=1,2,3$, then we say that the $p$-th column and the $q$-th column in $R$ are permuted. Similarly we say that we permute vertices within a column if the names of the vertices in that column are permuted.
A \textit {rearrangement} of a $3$-row graph $R$, is a $3$-row graph $R'$ which is obtained from $R$ 
by permuting columns of $R$ and by permuting vertices in individual columns. 
Note that $R$ and $R'$ are isomorphic graphs. 
The proof of the following lemma is straightforward by retaining the edge-coloring during the process of rearranging $R$.

\begin{lemma}\label{arrange}
Let $R'$ be a rearrangement of a $3$-row graph $R$, then $R'$ has an amiable coloring if and only if $R$ has an amiable coloring. 
\end{lemma}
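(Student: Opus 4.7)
\textbf{Proof proposal for Lemma \ref{arrange}.}
Since being a rearrangement is a symmetric relation (inverting the column permutation and the within-column permutations turns $R'$ back into $R$), it suffices to prove one direction: if $(f,g)$ is an amiable coloring of $R$, then $R'$ admits an amiable coloring. The plan is to simply \emph{transport} $(f,g)$ along the canonical bijection between $V(R)$ and $V(R')$ induced by the rearrangement, following the hint that the edge-coloring is retained.

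More precisely, I would first note that the rearrangement of $R$ into $R'$ is realized by a graph isomorphism $\phi: V(R) \to V(R')$ (together with its induced edge bijection $\psi: E(R) \to E(R')$): each physical vertex keeps its incidences, only its label $v_{ij}$ is replaced by a new label according to the column permutation composed with the within-column permutations. Now define an edge-coloring $g'$ on $R'$ by $g'(\psi(e)) := g(e)$ and a vertex-coloring $f'$ on $R'$ by $f'(\phi(v)) := f(v)$. It remains to verify the three conditions of Definition \ref{d:swedish} for the pair $(f', g')$.

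The verification is routine and rests on the observation that both types of permutations allowed in a rearrangement preserve columns \emph{as sets}: permuting columns sends the set $\{v_{1p}, v_{2p}, v_{3p}\}$ to a set of the form $\{v_{1q}, v_{2q}, v_{3q}\}$ in $R'$, and permuting vertices within a column just reshuffles the three labels among the same physical vertices. Condition (i) is a statement about the three vertex colors in a column being pairwise distinct, which is invariant under such reshuffling. Condition (ii) is a local statement about a vertex and its incident edges, which is preserved because $\phi$ and $\psi$ form an isomorphism and the colors are transported consistently. Condition (iii) only depends on the sum of color-$i$-degrees over a whole column, so its validity is unaffected by which of the three column-members is called $v_{1j}, v_{2j}, v_{3j}$, and by the relabeling of column indices.

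There is really no main obstacle here; the only point that needs mild care is keeping track of labels versus physical vertices so that condition (ii), which references the incidence relation $e \in E_v$, is read in the graph $R'$ (using $\psi(e) \in E_{\phi(v)}$). Once the two kinds of permutation are handled in this way, the lemma follows immediately, and the converse direction is obtained by applying the same argument to the inverse rearrangement.
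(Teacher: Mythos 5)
Your proposal is correct and matches the paper's (one-line) argument: the paper simply observes that the lemma is straightforward "by retaining the edge-coloring during the process of rearranging $R$," which is exactly your transport of $(f,g)$ along the relabeling isomorphism, with the key point being that conditions (i)--(iii) of Definition \ref{d:swedish} are column-wise and therefore invariant under permuting columns and permuting labels within a column. No further comment is needed.
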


The next definition is essential in the paper.

\begin{definition}\label{d:detachment}
Let $G$ be a connected cubic graph with a Kotzig-frame $F$ with $s$ components $L_1,L_2,...,L_s$ and a perfect coloring $\alpha$ of $F$.
Then the $3$-row graph $R(G,F,\alpha)$ is obtained by identifying in $G \cupdot \{v_{ij}:1\leq i\leq 3, \, 1\leq j\leq s\}$ all $2$-valent vertices of $ \{v \in L_j: \,\alpha (v)=i\}$ with $v_{ij}$, by deleting all $3$-valent vertices of the K-components of $F$, and by deleting all resulting loops.
\end{definition}

Figure \ref{f:row} exhibits the $3$-row graph $R(G_0,F_0,\alpha_0)$ of the graph $G_0$ of Figure \ref{f:perfect}. Note that in $R(G,F,\alpha)$, a column representing a C-component of $F$ contains precisely two isolated vertices.

\begin{figure}[htpb]\label{f:perfect}
\centering\epsfig{file= 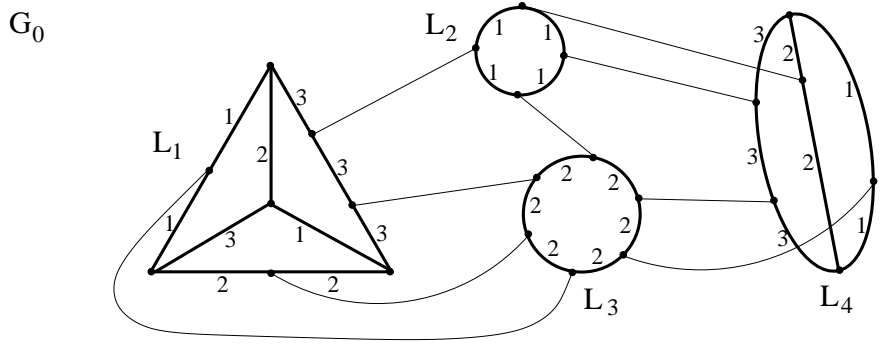,width=4.5 in}
\caption{A perfect coloring $\alpha_0$ (only the edge colors are illustrated) of a Kotzig-frame $F_0$ of a cubic graph 
$G_0$. The components $L_1,L_2,L_3,L_4$ of $F_0$ are in boldface.} 
\end{figure}

\begin{figure}[htpb]\label{f:row}
\centering\epsfig{file= 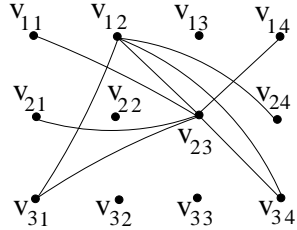,width=1.5 in}
\caption{The graph $R(G_0,F_0,\alpha_0)$.} 
\end{figure}

Note also that $L_1,L_2,...,L_s$ of $F$ corresponds to $C_1,C_2,...,C_s$ (see Def.\ref{rowgraph}) in $R(G,F,\alpha)$.
Moreover, observe that $R(G,F,\alpha)_C$ is eulerian since it is isomorphic to $G_F$.

Let us call two $3$-row graphs $R_1$, $R_2$ with $s$ columns \textit{identical} if they satisfy $v_{ab}v_{cd} \in E(R_1)$ if and only if $v_{ab}v_{cd} \in E(R_2)$ for every $a,c \in \{1,2,3\}$ and for every $b,d \in \{1,2,...,s\}$.
Recall that for the definition of a $3$-row graph, a labeling of the components of $F$ is needed.

\begin{lemma}\label{identical}
For every rearrangement $R'$ of $R(G,F,\alpha)$, there is a relabeling of the components of $F$ and a perfect coloring $\alpha_1$ of $F$ such that $R(G,F,\alpha_1)$ is identical with $R'$. 
\end{lemma}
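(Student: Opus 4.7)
The plan is to realize the rearrangement of $R(G,F,\alpha)$ by two independent adjustments to the pair $(F,\alpha)$: a relabeling of the components of $F$ that accounts for the column permutation, and a permutation of colors inside each component of $F$ that accounts for the permutation of vertices within each column. I would first decompose the given rearrangement as $(\sigma,\pi_1,\dots,\pi_s)$, where $\sigma$ is a permutation of $\{1,\dots,s\}$ describing which old column becomes which new column, and each $\pi_j$ is a permutation of $\{1,2,3\}$ describing how the three row-positions of column $j$ are permuted; concretely, the vertex at position $(i,j)$ of $R'$ is the vertex that sat at position $(\pi_j^{-1}(i),\sigma^{-1}(j))$ in $R(G,F,\alpha)$.

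First I would handle the column permutation by pure relabeling: declare the new $L_j$ to be the old $L_{\sigma^{-1}(j)}$. Next I would define $\alpha_1$ by keeping $\alpha$ on every $3$-valent vertex of every K-component (whose color is immaterial, as noted after Definition \ref{perfect}) and, on each remaining vertex or edge lying in the component now called $L_j$, replacing its $\alpha$-color $c$ by $\pi_j(c)$. The key step is to verify that $\alpha_1$ is again a perfect coloring of $F$. For a C-component this is immediate, since $\alpha$ assigns a single color to all its vertices and edges, so $\alpha_1$ does too. For a K-component I would check that the three conditions of Definition \ref{perfect} are invariant under a global permutation of the color set $\{1,2,3\}$: conditions (i) and (ii) are manifestly symmetric in the colors, while the implied $3$-edge-coloring of the underlying Kotzig graph $K$ is merely pushed forward by $\pi_j$ to another proper $3$-edge-coloring whose bicolored cycles are the same sets of edges as before (the color pair $\{a,b\}$ becomes $\{\pi_j(a),\pi_j(b)\}$), so each bicolored cycle is still hamiltonian and the coloring is still a Kotzig-coloring.

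Finally I would check that $R(G,F,\alpha_1)$ is identical with $R'$ as labeled graphs. By Definition \ref{d:detachment}, the vertex at position $(i,j)$ of $R(G,F,\alpha_1)$ is obtained by identifying precisely the $2$-valent vertices of the new $L_j$, that is, of the old $L_{\sigma^{-1}(j)}$, whose $\alpha_1$-color equals $i$; by construction these are the $2$-valent vertices whose $\alpha$-color was $\pi_j^{-1}(i)$, i.e., exactly the vertices identified at position $(\pi_j^{-1}(i),\sigma^{-1}(j))$ of $R(G,F,\alpha)$, which is what $R'$ demands at $(i,j)$. Since the remaining edges of $G$ are untouched by the procedure and loop-deletion depends only on which identifications are made, the edge sets of $R(G,F,\alpha_1)$ and $R'$ agree. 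The only step that really requires a small argument rather than pure bookkeeping is the invariance of the perfect-coloring property under a color-permutation within a single K-component; all the rest amounts to carefully unwinding definitions.
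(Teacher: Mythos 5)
Your proposal is correct and follows essentially the same route as the paper: column permutations are absorbed into a relabeling of $L_1,\dots,L_s$, and within-column permutations are absorbed into permuting the colors of the perfect coloring on the corresponding component. Your write-up merely makes explicit the bookkeeping and the (easy) check that a color-permuted perfect coloring is still perfect, which the paper leaves implicit.
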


\begin{proof}
Let $R(G,F,\alpha)$ have $s$ columns.
Permuting columns corresponds to a relabeling of $L_1,L_2,...,L_s$ and is tantamount to a permutation of $\{1,2,...,s\}$.
Permuting $v_{im}$ and $v_{jm}$ in the $m$-th column corresponds to permuting colors $i$ and $j$ in the perfect coloring $\alpha$ of the corresponding K-component $L_m$. One defines the recoloring $\alpha_1$ with respect to a C-component, analogously. \qed
\end{proof}

\begin{thm}\label{t:swedish}
Let $G$ be a cubic graph with a Kotzig-frame $F$ and a perfect coloring $\alpha$ of $F$. Then $G$ has a $6$-cycle double cover 
if $R(G,F,\alpha)$ has an amiable coloring.
\end{thm}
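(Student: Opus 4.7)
The plan is to produce six eulerian subgraphs $H_{(a,b)}$ of $G$, indexed by ordered pairs $(a,b)$ with $a\neq b$ in $\{1,2,3\}$, such that every edge of $G$ lies in exactly two of them. By Lemma~\ref{identical} I may normalize so that $f(v_{ij})=i$ for all $i,j$; the amiable edge-coloring $g$ then assigns to each inter-component non-$F$ edge of $G$ a color distinct from the $\alpha$-colors of both endpoints. For each unordered pair $\{a,b\}\subseteq\{1,2,3\}$ with third color $c$, I consider the bicolored subgraph $B_{\{a,b\}}\subseteq F$: a Hamilton cycle in each K-component (by the Kotzig property) plus the entire cycle of each C-component of color $a$ or $b$.

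Call a 2-valent-in-$F$ vertex $v$ lying on $B_{\{a,b\}}$ a \emph{split point} of that bicolored subgraph if the non-$F$ edge incident to $v$ has color $c$. Amiable condition~(iii), together with $d_c(v_{c,j},R)=0$ from (ii), implies that the number of split points on each cycle of $B_{\{a,b\}}$ (contained in a single component $L_j$) equals $d_c(v_{a,j},R)+d_c(v_{b,j},R)$, which is even. I orient each such cycle and label its arcs between consecutive split points alternately by $(a,b)$ and $(b,a)$; the even parity makes the assignment well-defined. Each $F$-edge on an arc receives that arc's label, while each non-$F$ edge at a split point is placed in \emph{both} labels $(a,b)$ and $(b,a)$. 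Set $H_{(a,b)}=\{e\in E(G):e\text{ carries label }(a,b)\}$.

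For verification: an $F$-edge of color $d$ lies in exactly two bicolored subgraphs $B_{\{d,*\}}$, picking up one label of the form $(d,*)$ or $(*,d)$ in each, so it lies in exactly two of the six $H$'s. A non-$F$ edge of color $c$ has both endpoints as split points on the unique bicolored $B_{\{a,b\}}$ with $\{a,b\}=\{1,2,3\}\setminus\{c\}$; crucially, $\{a,b\}$ depends only on $c$ and not on the endpoint colors, so the label pair $\{(a,b),(b,a)\}$ matches at both ends. A local degree check (at each 3-valent Kotzig vertex no arc flips, so each bicolored contributes degree 2 to its current arc-label; at each 2-valent $F$-vertex $v$ the unique bicolored through $v$ that is split at $v$ contributes degree 2 to each of the two opposite-ordered labels via one $F$-edge plus the non-$F$ edge, while the other bicolored through $v$ is not split and contributes degree 2 to a single label) shows each $H_{(a,b)}$ is eulerian.

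The main obstacle I foresee is the treatment of chords of $F$, which correspond to loops deleted from $R$ and therefore carry no $g$-color; since $R$ is a 3-row graph, every such chord joins two vertices of the same $\alpha$-color $d$. I plan to absorb each chord $e$ by assigning it an arbitrary color $c\in\{1,2,3\}\setminus\{d\}$, which promotes both endpoints of $e$ to split points of the bicolored cycle $B_{\{d,d'\}}$ with $d'=\{1,2,3\}\setminus\{d,c\}$. Because adding a chord contributes exactly two split points to the same bicolored cycle, the parity and alternating labeling survive, and $e$ is placed into $H_{(d,d')}$ and $H_{(d',d)}$ exactly as if it were an inter-component non-$F$ edge; one must then check that this choice does not conflict with the independent labeling decisions made for other bicolored cycles sharing the same 2-valent vertex.
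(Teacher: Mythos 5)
Your construction is essentially the paper's proof carried out explicitly. For each color $c$ the paper forms $J_c$ consisting of the $F$-edges not colored $c$, the non-$F$ edges of $g$-color $c$, and a class of chords, observes via Def.~\ref{d:swedish}(ii),(iii) that each resulting cycle of $F$ meets an even number of color-$c$ edges, and then simply quotes the fact that a cubic graph with an even $2$-factor has a $2$-cycle cover using the $2$-factor once and the complementary matching twice; your alternating-arc labelling of the bicolored cycles, with each color-$c$ edge placed in both ordered classes, is exactly the standard proof of that quoted fact, and your $H_{(a,b)}\cup H_{(b,a)}$ is the paper's $S_c$. Your parity count (using $d_c(v_{cj},R)=0$ from (ii) together with (iii)) and the local degree checks are correct, and the ``conflict'' you worry about at the end is vacuous: the labellings of distinct bicolored cycles are completely independent, the only constraints being the within-cycle alternation (guaranteed by the even number of split points) and the fact that a color-$c$ edge is assigned to the pair $\{(a,b),(b,a)\}$ determined by $c$ alone, which you already verified.

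The one genuine error is the assertion that every chord of $F$ joins two vertices of the same $\alpha$-color. This is false: a chord of a K-component may join $2$-valent vertices lying on differently colored edges of the underlying Kotzig graph, and such a chord does not become a loop under the identification of Def.~\ref{d:detachment}; the paper's own partition $F^*=L_1\cupdot L_2\cupdot L_3$, which only requires $\alpha(v)\neq i\neq\alpha(w)$ for $vw\in L_i$, is designed precisely to accommodate this case. The repair is immediate and stays inside your scheme: give a chord $vw$ any color $c\notin\{\alpha(v),\alpha(w)\}$ (this color is unique when $\alpha(v)\neq\alpha(w)$). Both endpoints then lie on the single cycle of the bicolored subgraph avoiding $c$ within their component (by Hamiltonicity of the bicolored cycles of the Kotzig graph, respectively because a C-component is itself that cycle), so each chord still contributes exactly two split points to one cycle, and your parity, labelling and degree arguments go through unchanged. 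With that correction the proposal is a complete proof along the same lines as the paper's.
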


\begin{proof}
Abbreviate $R(G,F,\alpha)$ by $R$. Let $(f,g)$ be an amiable coloring of $R$ (see Def.\ref{d:swedish}). 
Suppose without loss of generality that 
$$f(v_{ij})=i\,\,\,\,\,\text{for every} \,\,v_{ij} \in V(R) \text{,}\,\,i=1,2,3 \,\,\text{and} \,\,j=1,2,...,s\,\,\,\,\,\,\,\,\,\,\,\,\,(*)$$
  
Note: if need be, one can permute vertices of individual columns of $R$ to obtain this property which would imply a change of the perfect coloring of $F$ (see Lemma \ref{identical}). 
For subsequent considerations, it is useful to regard the coloring $g$ of $E(R)$ as an edge coloring of 
$E(G)-E(F)-F^*$ (recall that $F^*$ is the set of chords of $F$ - see Def.\ref{kotzig}(c)). 
By the same token, the elements of $E(R)$ carry the same name as the corresponding elements in $E(G)-E(F)-F^*$.
All $2$-valent vertices $u$ of a component of $F$ satisfying $\alpha(u)=i$ correspond by Def.\ref{d:detachment} and by
$(*)$ to a vertex $x \in V(R)$ with $f(x)=i$ and vice versa. Therefore $g(e) \not\in \{\alpha(v),\alpha(w)\}$ for every $e \in E(R)$ with $e$ having endvertices $v$,$w$ in $G$.\\
We define three subgraphs of $G$ by using the following edge sets. For $i=1,2,3$ let $E_i= \{e \in E(G)-E(F)-F^*: g(e)=i\}$, $H_i = \{e \in E(F): \alpha(e) \not=i \}$ and let $F^* = L_1 \cupdot L_2 \cupdot L_3$ be such that $vw \in L_i$ 
implies $\alpha (v) \not=i$, $\alpha (w) \not =i$. 
Note that an edge $vw \in F^*$ satisfying $1=\alpha(v)=\alpha(w)$, say, may be part of $L_2$ or $L_3$. Hence the partition of $F^*$ is not necessarily unique.
The set $H_i$ defines a bicolored cycle in each K-component of $F$ which corresponds to a 
hamilton cycle in the homeomorphic Kotzig graph. Note that any one of these cycles in $G$ is incident with an even number of edges of $E_i$. This follows from Def.\ref{d:swedish}(iii) since at least one of the terms in (iii) vanishes.
Moreover, $H_i$ covers all edges of every C-component of $F$ 
which does not contain color $i$ in the coloring $\alpha$. By Def.\ref{d:swedish}, the number of edges of $E_i$ 
incident with a $C$-component is also even.
Finally, note that every $e \in L_i$ is a chord of a cycle of $G[H_i]$.

Set $$J_i = H_i \cup E_i \cup L_i, \,i=1,2,3\,.$$
$G[J_i]$ is a graph consisting of a subdivision of a $3$-edge colorable cubic graph and disjoint cycles. 
These are cycles (of components of $F$) which are not incident with edges of $E_i \cup L_i$. 
Obviously, every even $2$-factor (i.e every cycle of the $2$-factor has even length) of a bridgless cubic graph has a $2$-cycle cover such that the $2$-factor is covered once and the remaining $1$-factor is covered twice. Define $S_i$ to be a $2$-cycle cover of $G[J_i]$, covering every edge of $E_i \cup L_i$ twice and every edge of $H_i$ once. Since $J_1 \cup J_2 \cup J_3=E(G)$, $S=S_1 \cup S_2 \cup S_3$ is a $6$-cycle cover of $G$. It is straightforward to verify that $S$ is a $6$-CDC of $G$. \qed
\end{proof}

\section{Switching colors and $2$-row graphs}

Let $G$ be a graph with a given $2$-edge-coloring using the colors red and blue. $G$ is allowed to have loops. 
A \textit{switch} in a vertex $v \in V(G)$ is a recoloring of the edges incident with $v$ which are not loops by coloring every edge which is colored blue by red and every edge which is colored red by blue, i.e the colors incident with $v$ are switched. A loop incident with $v$ retains its color after a switch.

\begin{thm}\label{t:switch}
Let $G$ be a graph with a given $2$-edge-coloring $f$ using the colors red and blue.
Let $G_b$ be the graph obtained from $G$ by contracting every blue edge, i.e $G_b$ has only red edges.
Then there is a sequence of switches in vertices of $G$ such that the resulting graph has only blue edges if and only if $G_b$ is bipartite.
\end{thm}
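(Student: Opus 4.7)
The plan is to linearize the switching operation over $\mathbb{F}_2$, reducing the theorem to the standard characterization of bipartite graphs as $2$-vertex-colorable graphs. Identify the colors red and blue with $1,0 \in \mathbb{F}_2$ respectively. Since two switches at the same vertex cancel and switches at distinct vertices commute, any sequence of switches is captured by a vector $x \in \mathbb{F}_2^{V(G)}$, where $x_v$ is the parity of the number of switches performed at $v$. After applying the sequence corresponding to $x$, the final color of a non-loop edge $e = uv$ is $f(e) + x_u + x_v$, whereas every loop retains its original color. Hence ``the resulting graph has only blue edges'' is equivalent to the system
\[
x_u + x_v = f(uv) \text{ for every non-loop edge } uv \in E(G), \qquad f(\ell) = 0 \text{ for every loop } \ell.
\]

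Next, I would translate this system into a statement about $G_b$. The equations coming from blue edges read $x_u + x_v = 0$, so $x$ must be constant on every blue-connected component of $G$; equivalently, $x$ descends to a function $y$ on the vertex set of $G_b$. The red edges of $G$ are precisely the edges (and loops) of $G_b$, and the equation $x_u + x_v = 1$ attached to such an edge becomes $y(u') \neq y(v')$ in $G_b$, where $u',v'$ are the images of $u,v$ after contraction. Red loops in $G_b$ --- arising either from red loops already present in $G$ or from red edges whose endpoints have been identified during the contraction --- yield the unsatisfiable constraint $y(u') + y(u') = 1$. Thus the switching system admits a solution if and only if $G_b$ is loopless and admits a proper $2$-vertex-coloring $y$.

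Finally, by the classical characterization of bipartite graphs, a loopless graph admits a proper $2$-vertex-coloring if and only if it is bipartite (and a graph with a loop is never bipartite by convention). This gives the required equivalence; to go from a bipartition of $G_b$ back to a switching sequence, one simply switches at every vertex $v \in V(G)$ whose blue component is labeled $1$, and verifies directly from the displayed equations that every non-loop edge becomes blue. The only subtle point, and the place where I would be most careful, is the uniform treatment of loops: red loops of $G$ survive into $G_b$ as red loops, while red edges of $G$ with both endpoints in the same blue component become \emph{new} red loops of $G_b$, and both kinds of obstructions collapse into the single statement ``$G_b$ has a loop, hence is not bipartite''.
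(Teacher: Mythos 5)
Your proposal is correct and follows essentially the same route as the paper: encode the switching sequence as a parity vector in $\mathbb{F}_2^{V(G)}$, observe that the all-blue condition becomes the system $x_u+x_v=f(uv)$, pass to the blue-contracted graph $G_b$, and identify solvability there with a proper $2$-vertex-coloring, i.e.\ bipartiteness. The only cosmetic difference is that you descend the solution directly to blue components (with an explicit discussion of loops), whereas the paper contracts blue edges one at a time and notes switchability is preserved at each step.
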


\begin{proof}
Let $f^*$ be the $2$-edge-coloring of $G$ which is obtained after having applied a sequence of switchings $S$ in the 
coloring $f$ of $G$.
Define $\beta: V(G) \mapsto \mathbb Z_2$ with $\beta (v)=0$ if the multiplicity of $v$ in $S$ is even, otherwise set 
$\beta (v)=1$. Let $vw \in E(G)$. It is not difficult to see that $f(vw)=f^*(vw)$ iff
$\beta (v) + \beta (w) =0$ and thus $f(vw) \not= f^*(vw)$ iff $\beta (v) + \beta (w) =1$. This observation implies that the $f$-colored graph $G$ can be switched blue
(i.e there exists a sequence of switches such that the resulting graph has only blue edges) iff there is a map $\gamma: V(G) \mapsto \mathbb Z_2$ such that\\
(i) $\gamma (v) + \gamma (w) =0$ if $f(vw)$ is blue and \\
(ii) $\gamma (v) + \gamma (w) =1$ if $f(vw)$ is red. \\
Let us contract one blue edge, $xy$ say, in the $f$-colored graph $G$.
It is straightforward to see that $G/xy$ can be switched blue iff $G$ can be switched blue. By proceeding analogously for the remaining blue edges we obtain that $G$ can be switched blue iff $G_b$ can be switched blue.
By implication (ii) above, it follows that $G_b$ is bipartite iff $G_b$ and thus $G$ can be switched blue which finishes the proof. \qed
\end{proof}

We define a rearrangement for $2$-row graphs which are defined analogously to $3$-row graphs. Let $R$ be a $2$-row graph 
with $s$ columns and let $U \subseteq \{1,2,3,...,s\}$, then $R^U$ denotes the $2$-row graph which is obtained from $R$ by renaming $v_{1u}$ by $v_{2u}$ and $v_{2u}$ by $v_{1u}$, for every $u \in U$. 

\begin{cor}\label{cor:switch}
Let $R$ be a $2$-row graph such that $R_C$ is a forest. Then there is a set $U \subseteq \{1,2,3,...,s\}$ such that $R^U$ has no edge which joins a vertex from the first row to a vertex in the second row.
\end{cor}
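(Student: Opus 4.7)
The plan is to invoke Theorem~\ref{t:switch} on $R_C$ with a natural red/blue coloring of its edges. First I note that $R_C$ is a simple graph: independence of each column $C_j$ in $R$ prevents loops in $R_C$, and since $R_C$ is a forest it has no parallel edges either, so no two distinct edges of $R$ join the same pair of columns. Consequently every edge $c_ac_b$ of $R_C$ is the image of a unique edge of $R$, and I can unambiguously define a $2$-edge-coloring $f$ on $R_C$ by coloring $c_ac_b$ blue if the corresponding edge of $R$ has both endpoints in the same row (both in $V_1$ or both in $V_2$) and red otherwise, i.e.\ if it is a cross edge.

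Next I identify the effect of forming $R^U$ with the switching operation of the previous subsection. Interchanging the names of $v_{1u}$ and $v_{2u}$ in a single column $u$ converts every same-row edge of $R$ incident with column $u$ into a cross edge and vice versa, which is exactly the effect on $f$ of a switch at the vertex $c_u$ of $R_C$. Moreover, as observed in the proof of Theorem~\ref{t:switch}, the coloring resulting from several switches depends only on the parity of the number of times each vertex is switched. Hence finding $U$ as in the statement is equivalent to finding a sequence of switches on $R_C$ (under $f$) that turns every edge blue.

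Finally, I apply Theorem~\ref{t:switch} directly. The graph $(R_C)_b$, obtained from $R_C$ by contracting all blue edges, is a contraction of a forest and is therefore itself a forest, hence bipartite. Theorem~\ref{t:switch} then supplies a switching sequence on $R_C$ turning $f$ all blue, and taking $U$ to be the set of indices $u$ for which $c_u$ is switched an odd number of times yields an $R^U$ without cross edges, as required. I do not foresee a real obstacle: the only thing to verify carefully is the simplicity of $R_C$, so that the blue/red dichotomy is well defined, and the faithful translation between switches at $c_u$ and row-swaps in column $u$; both are immediate from independence of columns and the forest hypothesis.
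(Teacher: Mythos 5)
Your proposal is correct and follows essentially the same route as the paper: color cross edges red and same-row edges blue, identify a row-swap in column $u$ with a switch at $c_u$, and apply Theorem \ref{t:switch} using that a contraction of a forest is a forest and hence bipartite. The extra care about simplicity of $R_C$ is harmless but not needed, since the edge sets of $R$ and $R_C$ correspond bijectively, so the coloring transfers edge by edge in any case.
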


\begin{proof}
Recall that the edges of $R$ and $R_C$ correspond bijectively to each other.
Using this fact we define a $2$-edge-coloring of $R$ and and thus also of $R_C$: color every edge of $R$ which joins a vertex from the first row to the second row by red and color all remaining edges blue. 
 It is straightforward to see that
 a switch in $c_j \in V(R_C)$ corresponds to a renaming of $v_{1j}$ with $v_{2j}$ in $R$ and vice versa. Hence in order to prove the corollary it suffices to show that $R_C$ allows a sequence of switches such that finally the graph has only blue edges. Since contractions of edges in an acyclic graph result in an acyclic graph, and since acyclic graphs are bipartite, the proof follows by applying Theorem \ref{t:switch}. \qed
\end{proof}

\section{Proof of 
the main result}

The use of $t$-joins will be essential for the proof.
A $t$-join with $t \subseteq V(G)$ is a spanning subgraph $T^*$ of $G$ such that $d(v,T^*)$ is odd if and only if $v \in t$.
Note that $T^*$ may have isolated vertices. The following lemma is well known; we quote it because we will use it several times.

\begin{lemma}\label{lem:tjoin}
Let $G$ be a graph and $t \subseteq V(G)$. Then $G$ has an acyclic $t$-join if and only if the number of vertices of $t$
in every component of $G$ is even.
\end{lemma}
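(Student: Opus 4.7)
The plan is to prove the two directions separately, working component by component. For the forward direction, suppose $T^*$ is an acyclic $t$-join of $G$. For any component $C$ of $G$, the subgraph of $T^*$ lying inside $C$ is a forest whose odd-degree vertices are precisely $t \cap V(C)$; the handshaking lemma applied to this forest forces $|t \cap V(C)|$ to be even.

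For the converse, by treating each component separately and taking the disjoint union of the resulting forests, it suffices to handle the case where $G$ is connected and $|t|$ is even. Fix a spanning tree $T$ of $G$, root it at an arbitrary vertex $r$, and for every non-root vertex $v$ let $T_v$ denote the subtree rooted at $v$ (i.e.\ $v$ together with its descendants). Define
$$ T^* \,:=\, \{\, e=uv \in E(T) : v \text{ is the child endpoint of } e,\ |t \cap V(T_v)| \text{ odd} \,\}. $$
Since $T^* \subseteq E(T)$, the subgraph $T^*$ is automatically acyclic, so the only remaining task is to verify that its set of odd-degree vertices equals $t$.

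This reduces to a short parity calculation at each vertex. For a non-root vertex $v$ with parent-edge $pv$ and child-edges $e_1,\dots,e_k$ going to children $w_1,\dots,w_k$, the identity $|t \cap V(T_v)| \equiv [v \in t] + \sum_i |t \cap V(T_{w_i})| \pmod 2$ combined with the defining rule of $T^*$ yields $[pv \in T^*] \equiv [v \in t] + \sum_i [e_i \in T^*] \pmod 2$, which rearranges to $d(v, T^*) \equiv [v \in t] \pmod 2$. For the root $r$, the sets $V(T_{w_i})$ over its children partition $V(G)\setminus\{r\}$, so $d(r, T^*) \equiv |t|-[r \in t] \pmod 2$, and the hypothesis $|t|$ even turns this into $d(r,T^*) \equiv [r \in t] \pmod 2$. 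I do not anticipate any real obstacle here; the only subtlety is precisely this root computation, which is exactly where the component-by-component evenness hypothesis gets consumed.
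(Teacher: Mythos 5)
Your proof is correct. Note that the paper itself gives no proof of this lemma at all -- it is quoted as a well-known fact -- so there is nothing to compare against; your argument (handshaking within each component for necessity, and the rooted spanning tree with the subtree-parity rule $e=uv\in T^*$ iff $|t\cap V(T_v)|$ is odd for sufficiency) is a complete and standard way to establish it, with the acyclicity coming for free from $T^*\subseteq E(T)$ and the evenness hypothesis consumed exactly at the root computation, as you say. An equally common alternative, for the record, is to pair up the vertices of $t$ within each component, take the symmetric difference of paths joining the pairs, and delete cycles; your construction avoids that cleanup step.
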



\PROOF{of Theorem \ref{main}}
Let $G_F$ be well connected with respect to a given perfect coloring $\alpha$. Abbreviate $R(G,F,\alpha)$ by $R$ and let $s$ denote the number of components of $F$. Recall that every vertex of $G_F$ and every column $C_j$, $j=1,2,3...,s$, correspond to a component of $F$. Furthermore, every edge of $R$ corresponds to an edge of $E(G)-E(F)-F^*$ and vice versa. The edges of $R$ retain the edge labels of the corresponding edges in $E(G)$.\\

Without loss of generality, we assume the following to hold for $R$:\\
{\bf (i)} the columns $C_1,C_2,...,C_k$ correspond to the K-components of $F$ and the columns $C_{k+1},C_{k+2},...,C_s$ correspond to the $C$-components of $F$.\\
{\bf (ii)} all edges of $H$ have color $1$ (see Def.\ref{wellconnected}).\\
{\bf (iii)} $|V(H)|$ is maximal, i.e the perfect coloring and $H$ is chosen such that $H$ contains a maximum number of $C$-vertices in $G_{F,\alpha}$ (see Def.\ref{wellconnected}).\\
{\bf (iv)} if $H$ contains C-vertices, then $C_{k+1},C_{k+2},...,C_l$, $l\leq s$ are the corresponding columns in $R$. Moreover, 
 $\alpha$ can be assumed such that $v_{1m} \in V(R)$, $m=l+1,l+2,...,s$ is an isolated vertex of $R$, i.e the C-component corresponding to $C_m$ does not have a vertex of $\alpha$-color $1$.\\
  
Recall that $V_i=\{v_{ij}:1\leq j\leq s\} $, $i=1,2,3$. 
By our assumptions all non-isolated vertices in $V_1$ are part of one component 
in $R[V_1]$ which corresponds to $H$.
We will show that $R$ has an amiable coloring which will, by Theorem \ref{t:swedish}, finish the proof. Since $R$ has an amiable coloring trivially if $s=1$, we assume $s>1$.\\


Set $R_{2,3}=R[V_2 \cup V_3]$ and identify $v_{2j}$ with $v_{3j}$ in $R_{2,3}$, $j=1,2,...,s$, 
to obtain the graph $R^*_{2,3}$. Let $Y^*_{2,3}$
denote the set of vertices in $R^*_{2,3}$ with odd degree. Then $R^*_{2,3}$ has an acyclic $t$-join $T^*_{2,3}$ for $t=Y^*_{2,3}$ (if $Y^*_{2,3} = \emptyset$, let $T^*_{2,3}$ be the edgeless graph consisting of all vertices of $R^*_{2,3}$).
Every edge of $T^*_{2,3}$ corresponds to an edge of $R_{2,3}$. 
Denote the edges of $T^*_{2,3}$ corresponding to edges in $R[V_2]$ by $T^*_2$ and denote the edges of $T^*_{2,3}$ corresponding to edges in $R[V_3]$ by $T^*_3$. 
Obviously, $T^*_{2,3}$ may contain an edge corresponding to an edge joining a vertex in $V_2$ with a vertex in $V_3$.
However, we can assume the opposite for $R$:\\
{\bf (v)} No edge of $T^*_{2,3}$ corresponds to an edge in $R_{2,3}$ which joins a vertex of $V_2$ with a vertex of $V_3$.\\
This assumption is possible since otherwise we could apply Corollary \ref{cor:switch} to the $2$-row graph contained in $R_{2,3}$ which yields $T^*_{2,3}$ by vertex identification. Note that this would mean a modification of $\alpha$ by exchanging the $\alpha$-color $2$ and $3$ in some components of $F$ (which does not change $H$). This would imply a rearrangement of $R$ which is no problem by Lemma \ref{arrange}.

We define the vertex coloring of an amiable coloring of $R$ (see Def.\ref{d:swedish}) as follows:\\ 
$$f(v_{1j})=2, f(v_{2j})=3,f(v_{3j})=1, \,j=1,2,...,s\,\,.$$
Moreover, we use (v) and set 
$$g(e)=2 \,\,\text {iff} \,\,e \in E(R_{2,3})- E(T^*_2) - E(T^*_3)\,\,.$$
Thus, (iii) in Def.\ref{d:swedish} is fulfilled for $i=2$. 
To obtain the edges of $R$ with $g$-color $1$, we first set
$$R^*_{1,2}=(R[V_1 \cup V_2]-E(R[V_2])) \cup T^*_2$$ and 
$$Y_1= \{v_{1m} \in V_1: d(v_{1m},R^*_{1,2}) \not\equiv d(v_{2m},R^*_{1,2}) \pod{2} \,\,\,\text{with} \,\,\,m \in \{1,2,...,s\}\}\,\,.$$

We claim that $$v_{1m'} \not\in Y_1, \forall m' \in \{l+1,l+2,...,s\}\,.$$
First we note that by (iv), $v_{1m'}$ is an isolated vertex in $R$.
If $d(v_{2m'},R^*_{1,2})=0$, we are finished. Hence we assume $d(v_{2m'},R^*_{1,2}) \not=0$.
Then $d(v_{2m'},R)$ is even since $d(v_{1m'},R)=d(v_{3m'},R)=0$. Moreover no vertex in $V_1$ is adjacent to $v_{2m'}$, otherwise it contradicts assumption (iii). Hence, $v_{2m'}$ and $v_{3m'}$ identified,
form in $R^*_{2,3}$ a vertex of even degree. Therefore, $d(v_{2m'},T^*_2)$ is even since $d(v_{3m'},R)=0$. Thus $d(v_{2m'},R^*_{1,2})$ is even, implying 
$d(v_{1m'},R^*_{1,2}) \equiv  d(v_{2m'},R^*_{1,2}) \pod{2}$, implying $v_{1m'} \not\in Y_1$.\\

By Lemma \ref{lem:tjoin} and since $Y_1$ is contained in the vertex set of 
the component in $R[V_1]$ corresponding to $H$ (see the above claim), $R[V_1]$ has a $t$-join $T^*_1$ for $t=Y_1$. Now, set $$g(e)=1 \,\,\text {iff} \,\,e \in E(R^*_{1,2})-E(T^*_1)$$ and $$g(e)=3 \,\,\text {iff} \,\, e \in E(R)-g^{-1}(2)-g^{-1}(1)\,\,.$$ Since $R_C$ is eulerian and since (iii) in Def.\ref{d:swedish} is fulfilled for $i=1,2$, it also holds for $i=3$ which finishes the proof.
\qed

By the proof of the above theorem and the properties of the $3$-row graph used in the proof, we obtain the following corollary which also implies Corollary \ref{cor:rows2} and Corollary \ref{cor:rows3}.

\begin{cor}\label{cor:rows}
Let $R$ be a $3$-row graph with $R_C$ being eulerian. Suppose that for some $i \in \{1,2,3\}$, at most one component of $R[V_i]$ has more than one vertex. Moreover, suppose that every column of $R$ which contains an isolated vertex of $R[V_i]$, has the following property: there is at most one vertex $v$ in the column such that $d(v,R) \not= 0$. Then $R$ has an amiable coloring.
\end{cor}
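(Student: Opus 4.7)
The plan is to reduce the hypotheses of the corollary to the setup used in the proof of Theorem \ref{main} and re-run that argument, noting that it only uses structural properties of the $3$-row graph $R$. Since amiability is preserved under rearrangement (Lemma \ref{arrange}), I may freely relabel rows, permute columns, and permute vertices within individual columns. Take $i=1$ without loss of generality, let $H$ be the unique non-trivial component of $R[V_1]$ (with $V(H)=\emptyset$ if $R[V_1]$ is edgeless), and permute columns so that $V(H)=\{v_{1j}:1\le j\le l\}$ for some $0\le l\le s$. Then $v_{1m}$ is isolated in $R[V_1]$ precisely when $m>l$.

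By the second hypothesis, for each $m>l$ at most one vertex of $C_m$ has positive degree in $R$. Permuting inside column $m$ (Lemma \ref{arrange}) lets me assume that this distinguished vertex, if it exists, is $v_{2m}$; the swap does not alter $H$, because whichever vertex ends up as $v_{1m}$ in the new labeling is isolated in the whole of $R$. Consequently both $v_{1m}$ and $v_{3m}$ are isolated in $R$ for every $m>l$, which is precisely the analogue of assumption (iv) in the proof of Theorem \ref{main}.

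With this setup the proof of Theorem \ref{main} applies verbatim: form $R_{2,3}=R[V_2\cup V_3]$ and its identification $R^*_{2,3}$, take an acyclic $t$-join $T^*_{2,3}$ of $R^*_{2,3}$ for its odd-degree vertex set (existence by the handshake lemma together with Lemma \ref{lem:tjoin}), invoke Corollary \ref{cor:switch} (via Lemma \ref{arrange}) to ensure that no edge of $T^*_{2,3}$ joins $V_2$ and $V_3$, set $f(v_{1j})=2$, $f(v_{2j})=3$, $f(v_{3j})=1$, assign $g$-color $2$ to every edge of $R_{2,3}\setminus T^*_{2,3}$, and form $R^*_{1,2}$ together with the defect set $Y_1$ as in the theorem. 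The one verification that is worth spelling out — and the only point at which the second hypothesis of the corollary is used — is the claim $v_{1m}\notin Y_1$ for every $m>l$. Since $v_{1m}$ is isolated in $R$, $d(v_{1m},R^*_{1,2})=0$; since $R_C$ is eulerian and $d(v_{3m},R)=0$, the identified vertex $v^*_m$ of $R^*_{2,3}$ has its degree congruent modulo $2$ to the number of $R$-edges joining $v_{2m}$ to $V_1$, and the $t$-join defining equation at $v^*_m$ together with assumption (v) then forces $d(v_{2m},R^*_{1,2})$ to be even. Hence $Y_1\subseteq V(H)$; since $H$ is connected and $|Y_1|$ is even by a global parity count, Lemma \ref{lem:tjoin} produces the required $T^*_1$ in $R[V_1]$. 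The remaining edges are colored and the axioms of Definition \ref{d:swedish} are verified exactly as in the proof of Theorem \ref{main}.
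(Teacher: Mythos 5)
Your proposal is correct and follows essentially the paper's own route: the paper obtains this corollary precisely by observing that the proof of Theorem \ref{main} uses only the structural properties of the $3$-row graph that your rearrangement reproduces, and your parity verification that $Y_1\subseteq V(H)$ correctly covers the one step where the theorem's proof instead invoked maximality of $H$ (there the active vertex of a column $m>l$ has no neighbours in $V_1$, which the abstract hypotheses do not guarantee). One tiny imprecision: after the row-$2$/$3$ switches securing assumption (v), the active vertex of such a column may end up in row $3$, so $d(v_{3m},R)=0$ can fail; but then $d(v_{2m},R^*_{1,2})=0$ and $v_{1m}\notin Y_1$ is immediate, exactly as in the trivial case of the theorem's proof.
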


\begin{cor}\label{cor:rows2}
Let $R$ be a $3$-row graph such that $R_C$ is eulerian. Then $R$ has an amiable coloring if $R[V_i]$ is connected for some $i \in \{1,2,3\}$.
\end{cor}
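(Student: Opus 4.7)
The plan is to derive Corollary \ref{cor:rows2} as a direct specialization of Corollary \ref{cor:rows}, by checking that connectivity of a single row $R[V_i]$ automatically forces both hypotheses of that stronger corollary to hold for the same index $i$.

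First I would dispose of the trivial case $s=1$. Then $R$ is a single column $\{v_{11},v_{21},v_{31}\}$, which is an independent set by Definition \ref{rowgraph}, so $R$ has no edges at all; setting $f(v_{i1})=i$ together with the empty edge coloring $g$ satisfies all three conditions of Definition \ref{d:swedish} vacuously, so $R$ has an amiable coloring. For the main case $s\geq 2$, the connected graph $R[V_i]$ has $|V_i|=s\geq 2$ vertices, so it consists of exactly one component and that component contains every vertex of $V_i$. In particular every vertex of $V_i$ has positive degree in $R[V_i]$, so $R[V_i]$ has no isolated vertex at all. Therefore the first hypothesis of Corollary \ref{cor:rows} (\emph{at most one component of $R[V_i]$ has more than one vertex}) is satisfied, and its second hypothesis (the condition on columns containing an isolated vertex of $R[V_i]$) holds vacuously. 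Applying Corollary \ref{cor:rows} then produces the desired amiable coloring of $R$.

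There is no serious obstacle here: the substantive work already took place in the proof of Theorem \ref{main} and its refinement to Corollary \ref{cor:rows}. The only points worth verifying with care are that the $s=1$ case really reduces to the empty-edge situation (so that Definition \ref{d:swedish}(iii), the parity constraint on each column, is trivially met) and that connectivity of $R[V_i]$ for $s\geq 2$ really rules out isolated vertices in that row, since it is the absence of such isolated vertices that makes the second hypothesis of Corollary \ref{cor:rows} vacuous. Both are immediate from the definitions.
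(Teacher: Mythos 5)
Your proposal is correct and is essentially the paper's own route: the paper states that Corollary \ref{cor:rows} implies Corollary \ref{cor:rows2}, and your argument simply supplies the (easy) verification that connectivity of $R[V_i]$ makes both hypotheses of Corollary \ref{cor:rows} hold, with the $s=1$ case trivial. Nothing is missing.
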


\begin{cor}\label{cor:rows3}
Let $R$ be a $3$-row graph with $s$ columns and with $R_C$ being eulerian. Let $\{p,q\} \subseteq \{1,2,...,s\}$ and suppose that every column $C_j$ of $R$ with $j \not\in \{p,q\}$ contains two isolated vertices in $R$. Then $R$ has an amiable coloring.
\end{cor}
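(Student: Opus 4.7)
The plan is to reduce Corollary \ref{cor:rows3} to Corollary \ref{cor:rows} by rearranging $R$ via Lemma \ref{arrange} so that the hypotheses of Corollary \ref{cor:rows} hold with some $i\in\{1,2\}$. Since the amiable-coloring conditions are column-local, amiable colorings may be built on each connected component of $R_C$ independently, so I assume without loss of generality that $R_C$ is connected. Let $S\subseteq\{p,q\}$ collect those indices for which $C_j$ has at least two non-isolated vertices in $R$; by hypothesis every column outside $S$ has at most one non-isolated vertex.

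If $S=\emptyset$, Lemma \ref{arrange} lets me place every non-isolated vertex of $R$ in row $1$, making $R[V_2]$ edgeless. Corollary \ref{cor:rows} then applies with $i=2$: condition (a) holds vacuously and condition (b) holds since every column has at most one non-isolated vertex. So assume $S\neq\emptyset$ from now on; I aim for $i=1$ and build a single row-$1$ path anchoring the columns of $S$. If $S=\{p\}$, pick a non-isolated $u\in C_p$ together with an $R$-neighbor $w$ of $u$, and by Lemma \ref{arrange} set $v_{1,p}=u$ and place $w$ as the row-$1$ vertex of its column, deflecting every other non-isolated vertex (including the remaining non-isolated vertices of $C_p$ and, if distinct from $w$, the unique non-isolated vertex of $C_q$) into row $2$ or $3$. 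If $S=\{p,q\}$, exploit connectivity of $R_C$ to choose a simple path $c_p=c_{a_0},c_{a_1},\ldots,c_{a_t}=c_q$; its interior columns are non-special, so each $C_{a_i}$ ($1\le i\le t-1$) has a unique non-isolated vertex $w_i$, and the path lifts to an $R$-path $u_0 w_1\cdots w_{t-1}u_t$ with $u_0\in C_p$ and $u_t\in C_q$. Set $v_{1,p}=u_0$, $v_{1,q}=u_t$, $v_{1,a_i}=w_i$, and deflect every other non-isolated vertex into row $2$ or $3$.

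After the rearrangement, the non-isolated vertices of $V_1$ are precisely those on the chosen edge (when $|S|=1$) or lifted path (when $|S|=2$), and they form a connected subgraph of $R[V_1]$ (any additional $R$-edges among them only strengthen the connection), so $R[V_1]$ has at most one non-trivial component. For condition (b), every column whose row-$1$ vertex is isolated in $R[V_1]$ lies outside the chosen edge or path and therefore, by hypothesis, has at most one non-isolated vertex in $R$. Corollary \ref{cor:rows} with $i=1$ then yields the required amiable coloring.

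The main subtlety lies in the $|S|=2$ case: the existence of the path needs $R_C$ to be connected (handled by the initial component-wise reduction), and the simplicity of the chosen path is what guarantees that each interior column is non-special and hence contributes a single candidate row-$1$ vertex $w_i$. A minor bookkeeping check is that the ``extra'' non-isolated vertices of $C_p$ and $C_q$ deflected into row $2$ or $3$ never contaminate $R[V_1]$, since any $R$-edge incident with such a vertex has that vertex outside $V_1$.
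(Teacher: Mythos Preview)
Your proof is correct and follows essentially the same approach as the paper's: both arguments reduce to Corollary~\ref{cor:rows} by rearranging so that a path linking the two ``special'' columns sits in a single row, the key observation being that every internal column on such a path has a unique non-isolated vertex and can therefore be placed in that row. The paper organises the cases slightly differently (it takes a shortest $C_p$--$C_q$ path directly in $R$, and handles the disconnected case by splitting $R$ into two pieces rather than by your upfront reduction to connected $R_C$), but the substance is the same; your additional $|S|\in\{0,1\}$ sub-cases are not needed in the paper's treatment but do no harm.
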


\begin{proof}
Suppose there is a $vw$-path in $R$ with $v \in C_p$ and $w \in C_q$. Then we choose the shortest one and call it $P$. Let $R'$ be a rearrangement of $R$ such that all vertices of $P$ are in the first row of $R'$. Denote by $V'_1$ the vertex set of the first row of $R'$. Moreover, let $R$ have been rearranged in a manner such that $R'[V(P)]$ is the only component of $R'[V'_1]$ having more than one vertex. Then Corollary \ref{cor:rows} proves the result. Now suppose that a $vw$-path in $R$ does not exist. Then $R$ can be regarded as two disjoint $3$-row graphs where one contains the vertices of $C_p$ and where the other one contains the vertices of $C_q$. Both $3$-row graphs can be rearranged such that Corollary \ref{cor:rows} can be applied which finishes the proof.   \qed
\end{proof}

Note that Corollary \ref{cor:rows3} and Theorem \ref{t:swedish} implies Corollary \ref{mainimply1}.

\section{A reformulation of amiable colorings}

Let $R$ be a $3$-row graph with a fixed vertex $3$-vertex-coloring $f$. Is there a $3$-edge-coloring $g$ of $R$ such 
that $(f,g)$ is an amiable coloring? The theorem below, gives some answer to this problem implying a reformulation of an amiable coloring (see Corollary \ref{cor:zebra}). For this purpose we need some additional terminology.


\begin{definition}\label{d:zebra}
A parity-coloring of a $3$-row graph $R$ with $s$ columns is a vertex $2$-coloring $\phi$ of $R$ using the colors black and white such that for $j=1,2,...,s$ all of the following hold \\
(i) $\phi(v_{1j}) = \phi(v_{2j})$ if and only if $ d(v_{1j},R[V_1 \cup V_2]) \equiv |N(v_{2j}) \cap V_1| \pod{2} $.\\
(ii) $\phi(v_{2j}) = \phi(v_{3j})$ if and only if $d(v_{2j},R[V_2 \cup V_3]) \equiv d(v_{3j},R[V_2 \cup V_3]) \pod{2}$.\\
(iii) The number of black vertices is even in every component of $R[V_i]$ for $i=1,2,3$.
\end{definition}

The idea of a parity-coloring is used in the proof of Theorem \ref{main}. Note that vertices of different rows have to satisfy different conditions in Def.\ref{d:zebra}. To obtain more symmetry, we introduce the following coloring.

\begin{definition}\label{d:orca}
A symmetric parity-coloring of a $3$-row graph $R$ with $s$ columns is a vertex $2$-coloring $\phi$ of $R$ using the colors black and white such that for $j=1,2,...,s$ all of the following hold\\ 
(i) $\phi(v_{1j}) = \phi(v_{2j})$ if and only if $d(v_{1j},R[V_1 \cup V_{2}]) \equiv  |N(v_{2j}) \cap V_{1}|  \pod{2} $.\\
(ii) $\phi(v_{2j}) = \phi(v_{3j})$ if and only if $d(v_{2j},R[V_2 \cup V_{3}]) \equiv  |N(v_{3j}) \cap V_{2}|  \pod{2} $.\\
(iii) The number of black vertices is even in every component of $R[V_i]$ for $i=1,2,3$.
\end{definition}


\begin{thm}\label{t:extension}
Let $R$ be a $3$-row graph with $s$ columns and with $R_C$ being eulerian. Let $f$ be a vertex $3$-coloring of $R$ such that 
$f(v_{ij})=i$ for $j=1,2,...,s$. Then the following three properties are equivalent: (1) $f$ can be extended to an amiable coloring $(f,g)$ of $R$; (2) $R$ has a parity-coloring; (3) $R$ has a symmetric parity-coloring.
\end{thm}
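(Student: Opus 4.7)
\medskip

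\noindent\textbf{Proof plan for Theorem \ref{t:extension}.} Since $f(v_{ij})=i$ and Definition \ref{d:swedish}(ii) forbids an edge from having the color of either of its endpoints, every cross-row edge of $R$ has a uniquely determined color: an edge between $V_a$ and $V_b$ with $a\ne b$ must receive the unique color in $\{1,2,3\}\setminus\{a,b\}$. Thus extending $f$ to an amiable coloring $(f,g)$ amounts to choosing, in each row $V_i$, a $2$-edge-coloring of $R[V_i]$ using the two colors in $\{1,2,3\}\setminus\{i\}$ so that Definition \ref{d:swedish}(iii) holds in every column. Because $R_C$ is eulerian, the three column-parity equations (one per color) sum to zero modulo $2$, so only two of them are independent in each column.

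For (1)$\Leftrightarrow$(2), given $(f,g)$ define $\phi$ by declaring $v_{1j}$, $v_{2j}$, $v_{3j}$ to be black iff $d_2(v_{1j},R[V_1])$, $d_3(v_{2j},R[V_2])$, $d_2(v_{3j},R[V_3])$ is odd, respectively (white otherwise). Definition \ref{d:zebra}(iii) is then immediate from the handshake lemma applied inside each component of $R[V_i]$ to the subgraph spanned by the designated color. Parts (i) and (ii) of Definition \ref{d:zebra} follow by rewriting the Definition \ref{d:swedish}(iii) equation at column $j$ for color $3$ and color $1$, respectively: the cross-row contributions to $d_c(v_{ij},R)$ are the forced quantities of the form $|N(v_{ij})\cap V_\ell|$, and the within-row contributions are, modulo $2$, exactly the values of $\phi$ on that column, once one uses $d_c(v_{ij},R[V_i])+d_{c'}(v_{ij},R[V_i])=d(v_{ij},R[V_i])$ to convert between the two permitted colors of row $i$. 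Conversely, given $\phi$, Definition \ref{d:zebra}(iii) together with Lemma \ref{lem:tjoin} yields, for each $i$, a $t$-join $T_i$ of $R[V_i]$ whose set of odd-degree vertices equals the set of black vertices of row $i$; coloring $E(T_i)$ with one of the two colors allowed in row $i$ and $E(R[V_i])\setminus E(T_i)$ with the other produces $g$, and reversing the computation above verifies Definition \ref{d:swedish}(iii) for colors $1$ and $3$, with color $2$ then automatic.

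For (2)$\Leftrightarrow$(3), observe that Definition \ref{d:orca} differs from Definition \ref{d:zebra} only in part (ii), whose right-hand side is shifted by $d(v_{3j},R[V_3])$ modulo $2$. Hence from a parity-coloring $\phi$ construct a symmetric parity-coloring $\phi'$ by flipping $\phi(v_{3j})$ on exactly those columns $j$ with $d(v_{3j},R[V_3])$ odd, and leaving rows $1$ and $2$ unchanged; Definition \ref{d:orca}(ii) then holds by construction, Definition \ref{d:orca}(i) is unaffected, and Definition \ref{d:orca}(iii) in row $3$ is preserved because the number of odd-degree vertices of $R[V_3]$ in each component is even by handshake. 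The converse transformation is the same flip.

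The main obstacle I expect is the parity bookkeeping in the first direction of (1)$\Leftrightarrow$(2): one has to carefully split each $d_c(v_{ij},R)$ into its forced cross-row and free within-row parts, substitute the definition of $\phi$, and verify that the result matches Definition \ref{d:zebra}(i)--(ii) exactly. After this identification, both directions reduce to Lemma \ref{lem:tjoin} and the handshake lemma, and (2)$\Leftrightarrow$(3) is the single row-$3$ flip described above.
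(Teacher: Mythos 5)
Your plan is correct, and its treatment of (1)$\Leftrightarrow$(2) is essentially the paper's own argument: the same definition of $\phi$ via the parities of $d_2(v_{1j},R[V_1])$, $d_3(v_{2j},R[V_2])$, $d_2(v_{3j},R[V_3])$, the same splitting of each $d_c(v_{ij},R)$ into forced cross-row terms $|N(v_{ij})\cap V_\ell|$ and free within-row terms, the handshake argument inside each component of $R[V_i]$ for Def.~\ref{d:zebra}(iii), and $t$-joins via Lemma~\ref{lem:tjoin} in the converse direction, with color $2$ recovered from the eulerian condition on $R_C$. Where you genuinely differ is how (3) enters: the paper proves (1)$\Leftrightarrow$(3) from scratch, repeating the $t$-join construction and establishing the auxiliary congruence $(\bullet)$ (which again invokes $R_C$ being eulerian), whereas you pass directly from (2) to (3) by flipping $\phi(v_{3j})$ exactly in those columns with $d(v_{3j},R[V_3])$ odd. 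That flip is sound: Definitions~\ref{d:zebra} and~\ref{d:orca} differ only in (ii), by the term $d(v_{3j},R[V_3])$ modulo $2$; condition (i) involves only rows $1,2$ and is untouched; and (iii) in row $3$ is preserved because each component of $R[V_3]$ contains an even number of odd-degree vertices, so an even number of colors is flipped per component. Your route is shorter and shows in passing that (2)$\Leftrightarrow$(3) holds for every $3$-row graph, with no eulerian hypothesis needed for that step; the paper's route instead verifies each parity notion directly against amiable colorings. One point you should make explicit when writing it up: in the direction (2)$\Rightarrow$(1) the two admissible colors in row $i$ are not interchangeable --- the assignment must be the one dictated by your forward definition of $\phi$, namely $T_1$ and $T_3$ receive color $2$ and $T_2$ receives color $3$ (as in the paper); with the opposite choice in a row the verification of Def.~\ref{d:swedish}(iii) picks up an uncontrolled term such as $d(v_{2j},R[V_2])$ and can fail. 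Your phrase ``reversing the computation above'' carries this intent, but the specific choice should be stated.
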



\begin{proof}
Addition involving indices which denote the three rows of $R$ is regarded modulo $3$, i.e $3+1=1$.\\
We show that (1) is equivalent to (3). We first prove that the edge coloring $g$ implies a symmetric parity-coloring $\phi$ of $R$.
For $i=1,2,3$, set $$\phi(v_{ij})= \,\text{black iff} \,\,\, d_{i+1}(v_{ij},R[V_i]) \,\,\text{is odd,}\,\, j=1,2,...,s\,\,.$$ 
Consider the case $i=1$ and thus the subgraph of $R[V_1]$ induced by the edges of color $2$ in $R[V_1]$. Then each component of this subgraph is contained in a component of $R[V_1]$. Thus the number of vertices satisfying $d_2(v, R[V_1]) \equiv 1 \pod{2}$ is even in every component of $R[V_1]$. 
The cases $i=2,3$ are dealt with analogously. Therefore (iii) of Def.\ref{d:orca} is fulfilled. \\
Consider $\phi(v_{1j})$, $\phi(v_{2j})$. To prove (i) of Def.\ref{d:orca}, we show that $$d_2(v_{1j},R[V_1]) + d_3(v_{2j},R[V_2]) \equiv d(v_{1j},R[V_1 \cup V_2]) + |N(v_{2j}) \cap V_{1}| \pod{2}\,\,\,.\,\,
\,\,\,\,\,\,\,\,\,\,(*)$$

We observe that $$d_2(v_{1j},R[V_1]) + d_3(v_{2j},R[V_2])=$$
$$=(d(v_{1j},R[V_1 \cup V_2])- d_3(v_{1j},R[V_1 \cup V_2])) + (d_3(v_{2j},R[V_1 \cup V_2])-|N(v_{2j}) \cap V_{1}|)$$
(note that by definition of an amiable coloring, $d_i(v_{ij},R)=0$ if $f(v_{ij})=i$). 

Moreover $$d_3(v_{1j},R[V_1 \cup V_2])+ d_3(v_{2j},R[V_1 \cup V_2]) \equiv 0 \pod{2}$$ by Def.\ref{d:swedish} (iii). 
The preceding equation and this congruence imply the validity of $(*)$.
Now (i) of Def.\ref{d:orca} follows. Condition (ii) of Def.\ref{d:orca} is shown analogously.\\

We now prove that a symmetric parity-coloring $\phi$ of $R$ implies an amiable coloring $(f,g)$ of $R$.  
First, we show that $$ \phi(v_{3j}) = \phi(v_{1j}) \,\,\text{iff}\,\,
d(v_{3j},R[V_3 \cup V_1]) \equiv  |N(v_{1j}) \cap V_{3}|  \pod{2}\,\,.\,\,\,\,\,\,\,\,\,\,\,\,(\bullet) $$ 

Set $a_i:=d(v_{ij},R[V_i \cup V_{i+1}]) + |N(v_{i+1\,j}) \cap V_{i}| $ for $i=1,2,3$. 
We must thus prove that $a_3$ is even iff 
$\phi(v_{3j})=\phi(v_{1j})$.\\ 
Suppose first that $\phi(v_{3j})=\phi(v_{1j})$. Then either $\phi(v_{2j})=\phi(v_{1j})=\phi(v_{3j})$ and thus $a_1$, $a_2$ are both even, or 
$\phi(v_{2j}) \not\in \{\phi(v_{1j}),\phi(v_{3j})\}$ and thus $a_1$, $a_2$ are both odd. 
By definition of $a_i$ and since $R_C$ is eulerian by hypothesis, it follows that
$$d(v_{1j},R)+d(v_{2j},R)+d(v_{3j},R)=a_1+a_2+a_3 \equiv 0 \pod {2} \,\,.\,\,\,\,\,\,\,\,\,\,\,\,\,\,\,\,\,\,\,\,(**)$$ 
It now follows that $a_3$ is even. If $\phi(v_{3j}) \not= \phi(v_{1j})$, then $a_1 + a_2 \equiv 1 \pod{2}$, and thus $a_3$ is odd by $(**)$.\\
 Conversely suppose $a_3$ is even. By $(**)$, $a_3$ is even iff $a_1+a_2$ is even.
Then either $\phi(v_{1j}) \not= \phi(v_{2j})$ and $\phi(v_{2j}) \not= \phi(v_{3j})$ (if $a_1$, $a_2$ are both odd) or 
$\phi(v_{1j}) = \phi(v_{2j})$ and $\phi(v_{2j}) = \phi(v_{3j})$ (if $a_1$, $a_2$ are both even), implying 
$\phi(v_{1j}) = \phi(v_{3j})$. Thus $(\bullet)$ has been proved.

Let $T_i$ be a $t$-join of $R[V_i]$ where $t$ is the set of the black vertices of $R[V_i]$, $i=1,2,3$ (see 
Def.\ref{d:orca} (iii)). Moreover,
set $g(e)=i$ iff $e \in E(R[V_{i+1} \cup V_{i+2}])-E(T_{i+1})-(E(R[V_{i+2}])-E(T_{i+2}))$.
It follows that for every $e \in E(R[V_i])$, $g(e)=i+1$ iff $e \in T_i$. 
Note that (i),(ii) of Def.\ref{d:swedish} are fulfilled: (i) holds by hypothesis and (ii) holds by definition of $g$.
We show that (iii) in Def.\ref{d:swedish} for $i=3$ is fulfilled. The cases, $i=1,2$ are proven analogously. We must show that $A:=d_3(v_{1j},R[V_1 \cup V_2]) + d_3(v_{2j},R[V_1 \cup V_2])$ is even.
Clearly, $$A=|N(v_{2j}) \cap V_{1}| + d(v_{1j},R[V_1 \cup V_{2}]) - d(v_{1j}, T_1) + d(v_{2j}, T_2)\,.$$

Since $d(v_{1j}, T_1) + d(v_{2j}, T_2)$ is even iff $\phi(v_{1j})=\phi(v_{2j})$ 
by definition of $T_1$, $T_2$ and since by Def.\ref{d:orca} (i)
$$|N(v_{2j}) \cap V_{1}| + d(v_{1j},R[V_1 \cup V_{2}]) \,\,\text{is even iff} \,\,\phi(v_{1j})=\phi(v_{2j})\,\,,$$ $A$
is even which finishes the first part of the proof. \\

We show that (1) is equivalent to (2). We prove first that a parity-coloring $\phi$ of $R$ implies an amiable coloring $(f,g)$ of $R$; $f$ is defined by hypothesis and satisfies Def.\ref{d:swedish} (i). Let $T^*_i$ be a $t$-join of $R[V_i]$ where $t$ comprises the black vertices of $V_i$, $i=1,2,3$. 
For $e \in E(R[V_2])$, set $g(e)=3$ iff $e \in T^*_2$ and for $e \in E(R[V_3])$, set $g(e)=2$ iff $e \in T^*_3$.
Moreover, set $g(e)=1$ iff $e \in E(R[V_2 \cup V_3]) - (E(T^*_2) \cup E(T^*_3))$. 
Obviously, $$d_1(v_{2j},R)+d_1(v_{3j},R)=$$ 
 $$=d(v_{2j},R[V_2 \cup V_3])- d(v_{2j},R[T^*_2]) + d(v_{3j},R[V_2 \cup V_3]) - d(v_{3j},R[T^*_3])\,\,\,.$$ 
By definition of $T^*_i$, $d(v_{ij},R[T^*_i]) \equiv 1 \pod{2}$ iff $\phi(v_{ij})=$ black, $i=2,3$. Therefore and by (ii) of Def.\ref{d:zebra}, for both cases $\phi(v_{2j})=\phi(v_{3j})$ and $\phi(v_{2j})\not=\phi(v_{3j})$, the above sum is even which proves (iii) of Def.\ref{d:swedish} for $i=1$.\\
We extend the coloring $g$ by setting 
$$g(e)=3 \,\,\text{iff} \,\, e \in E(R[V_1 \cup V_2]) - E(T^*_1)-(E(R[V_2])-E(T^*_2))\,\,.$$ 
For $e \in E(R[V_1])$, set $g(e)=2$ iff $e \in T^*_1$.  
The color $\phi(v_{2j})$ ($\phi(v_{1j})$) decides the parity of $d(v_{2j},R[T^*_2])$ ($d(v_{1j},R[T^*_1])$).
Therefore and since $$d_3(v_{1j},R)+d_3(v_{2j},R)=$$ 
$$=d(v_{1j},R[V_1 \cup V_2])- d(v_{1j},R[T^*_1]) + d(v_{2j},R[T^*_2]) + |N(v_{2j}) \cap V_1|$$ 
and since (i) of Def.\ref{d:zebra} holds, this sum is even which verifies (iii) of Def.\ref{d:swedish} for $i=3$. Finally set $g(e)=2$ iff $e \in E(R)-g^{-1}(1)-g^{-1}(3)$. Then Def.\ref{d:swedish} (ii) is fulfilled. Since 
Def.\ref{d:swedish} (iii) holds for $i=1,3$, it also holds for $i=2$. Thus $(f,g)$ is an amiable coloring of $R$. \\
Suppose finally that an amiable coloring $(f,g)$ is given. Set $\phi(v_{2j})=$ black iff $d_3(v_{2j},R[V_2])$ is odd, $\phi(v_{3j})=$ black iff $d_2(v_{3j},R[V_3])$ is odd, and $\phi(v_{1j})=$ black iff $d_2(v_{1j},R[V_1])$ is odd.
By the same arguments which we used above to prove that (iii) of Def.\ref{d:orca} holds, it follows that (iii) of 
Def.\ref{d:zebra} is fulfilled. By Def.\ref{d:swedish} (iii), 
$$d(v_{2j},R[V_2 \cup V_3] - d_3(v_{2j},R[V_2])+ d(v_{3j},R[V_2 \cup V_3])- d_2(v_{3j},R[V_3])$$ 
is even since this sum equals $d_1(v_{2j},R)+d_1(v_{3j},R)$. Hence (ii) of Def.\ref{d:zebra} is fulfilled. Finally, since $d_3(v_{1j},R) + d_3(v_{2j},R)$ is even by Def.\ref{d:swedish} (iii) and equals $$d(v_{1j},R[V_1 \cup V_2])- d_2(v_{1j},R[V_1]) + d_3(v_{2j},R[V_2])+|N(v_{2j}) \cap V_1|\,\,,$$ 
Def.\ref{d:zebra} (i) is fulfilled. The theorem now follows. \qed 
\end{proof}

\begin{cor}\label{cor:zebra}
Let $R$ be a $3$-row graph such that $R_C$ is eulerian. Then the following three properties are equivalent: (i) $R$ has an amiable coloring; (ii) a rearrangement of $R$ has a parity-coloring; (iii) a rearrangement of $R$ has a 
symmetric parity-coloring .
\end{cor}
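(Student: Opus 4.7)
The plan is to derive Corollary \ref{cor:zebra} directly from Theorem \ref{t:extension} together with Lemma \ref{arrange}. The point is that Theorem \ref{t:extension} proves the three equivalences \emph{only} under the normalized hypothesis $f(v_{ij})=i$ on the vertex coloring, whereas a general amiable coloring of $R$ may use any bijection between $\{v_{1j},v_{2j},v_{3j}\}$ and $\{1,2,3\}$ on each column. A rearrangement of $R$ (specifically, permuting the vertices within columns) is exactly the operation that lets one pass between these two situations.

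For (i) $\Rightarrow$ (ii): suppose $R$ has an amiable coloring $(f,g)$. By Definition \ref{d:swedish}(i), for each column $C_j$ the three vertices receive all three colors $1,2,3$. Permute the vertices within each column of $R$ so that, after renaming, the vertex originally colored $i$ becomes $v_{ij}$; call the resulting $3$-row graph $R'$. Then $R'$ is a rearrangement of $R$, and by Lemma \ref{arrange} it still admits an amiable coloring. Since the induced vertex coloring on $R'$ satisfies $f(v_{ij})=i$, and since $R'_C \cong R_C$ is eulerian, the equivalence (1) $\Leftrightarrow$ (2) of Theorem \ref{t:extension} yields a parity-coloring of $R'$, proving (ii). Conversely, if some rearrangement $R'$ of $R$ has a parity-coloring, then Theorem \ref{t:extension} applied to $R'$ with the canonical vertex coloring $f(v_{ij})=i$ produces an amiable coloring of $R'$; a second application of Lemma \ref{arrange} transfers it to $R$.

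The equivalence (i) $\Leftrightarrow$ (iii) is proved by exactly the same argument, substituting the (1) $\Leftrightarrow$ (3) half of Theorem \ref{t:extension}. The only thing to verify is the essentially cosmetic claim that permuting vertices within a column qualifies as a rearrangement in the sense of Section~2, which is immediate from its definition. I do not expect any real obstacle here; the entire content of the corollary is the observation that Theorem \ref{t:extension}'s normalization hypothesis on $f$ is harmless up to rearrangement, and that Lemma \ref{arrange} guarantees amiable colorings are preserved by rearrangement in both directions.
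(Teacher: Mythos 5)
Your proposal is correct and follows exactly the route the paper intends: Corollary \ref{cor:zebra} is stated without proof precisely because it is the combination of Theorem \ref{t:extension} with Lemma \ref{arrange}, using a within-column permutation (a rearrangement) to achieve the normalization $f(v_{ij})=i$, which is legitimate since Definition \ref{d:swedish}(i) forces each column to carry all three vertex colors and $R_C$ is unchanged under rearrangement. No gaps.
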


\section*{Acknowledgments}
A.Hoffmann-Ostenhof was supported by the Austrian Science Fund (FWF) project P 26686.
H.Fleischner and R.H\"aggkvist were supported by the Austrian Science Fund (FWF) project P 27615.
Moreover, R.H\"aggkvist was also supported by the Department of Mathematics and Mathematical 
Statistics, Ume{\aa} University, Sweden.


\end{document}